\newcommand{\R}{{\mathbb R}}
\newcommand{\N}{{\mathbb N}}
\newcommand{\Z}{{\mathbb Z}}
\newcommand{\C}{{\mathbb C}}
\newcommand{\T}{{\mathbb T}}
\newcommand{\ST}{ S_{_{TG}}(\T)}
\newtheorem{theorem}{Theorem}[section]
\newtheorem{lemma}[theorem]{Lemma}
\newtheorem{proposition}[theorem]{Proposition}
\newtheorem{corollary}[theorem]{Corollary}
\theoremstyle{definition}\newtheorem{remark}[theorem]{Remark}
\theoremstyle{definition}\newtheorem{example}[theorem]{Example}
\theoremstyle{definition}\newtheorem{definition}[theorem]{Definition}
\theoremstyle{definition}
\title{Extensions of topological Abelian groups and three-space problems}
\date{}
\author{
Hugo J. Bello \\ {\small\em Departamento de F\'{\i}sica y Matem\'{a}tica Aplicada,}\\ {\small\em Universidad  de Navarra, Spain.}\\
{\small\em e-mail: hbello.1@alumni.unav.es}\\ \\
 Mar\'{\i}a Jes\'us Chasco \\ {\small\em Departamento de F\'{\i}sica y Matem\'{a}tica Aplicada,}\\ {\small\em Universidad  de Navarra, Spain.}\\ 
{\small\em e-mail: mjchasco@unav.es}\\ \\
 Xabier Dom\'{\i}nguez\\ {\small\em Departamento de M\'{e}todos Matem\'{a}ticos y de
Representaci\'{o}n,} \\{\small\em Universidad de A Coru\~{n}a, Spain.} \\{\small\em e-mail: xabier.dominguez@udc.es}
}
\begin{document}
\maketitle
\begin{abstract}
 A twisted sum in the category of topological abelian groups is
a short exact sequence  $0\to Y\to X \to Z\to 0$ where all maps are assumed to be continuous and open onto their images. The twisted sum splits if it is equivalent to $0\to Y\to Y \times Z \to Z\to 0$.
\par
We study the   class $\ST$ of topological groups $G$ for which  every  twisted sum $0\to \T\to X \to G\to 0$ splits. We prove that this class contains locally precompact groups, sequential  direct limits of locally compact groups and topological groups with $\mathcal{L}_\infty$  topologies. We also prove that it is closed by taking open and dense subgroups, quotients by dually embedded subgroups and coproducts. As a technique to find further examples of groups in $\ST$ we use the relation of this class  with the existence of  quasi-characters on $G$ and with  three-space problems for topological groups.
The subject is inspired on some concepts  known  in the framework of topological vector  spaces such as  the notion of $K$-space, which were interpreted for topological groups by Cabello.
\end{abstract}

\section{Introduction and Preliminaries}

In the theory of topological vector spaces (topological groups) a property $P$ is said to be a {\it 3-space property} if whenever a closed subspace (subgroup) $Y$ of a space (group) $X$ and the corresponding quotient $X/Y$ both have property $P,$ $X$  also has property $P$.

A short exact sequence of spaces (groups) $0\to Y\stackrel{\imath}{\to} X \stackrel{\pi}{\to}Z\to 0$ will be called {\it twisted sum} and the space (group) $X$ will be called an {\em extension }  of $Z$ by  $Y$. Both $\imath$ and $\pi$ are assumed to be continuous and open onto their images. Using this language, 3-space properties can be described as those which are preserved by forming extensions.

An example of a 3-space property in the category of Banach spaces is reflexivity. However, the point separating property (i.e. having a dual space which separates points) is not a 3-space property. (Consider the space $l_p$ for $0<p<1$, and $M$ a weakly closed subspace of $l_p$ without the Hahn-Banach extension property. If we take as $N$ the kernel of some continuous linear functional on $M$ which does not extend to $l_p$, then $X=l_p/N$ does not have the point separating property but both $L:=M/N$ and $X/L$ have this property (see \cite{kaltonsampler}).)

In the category of topological Abelian groups, local compactness, precompactness, metrizability and completeness are 3-space properties. However, $\sigma$-compactness, sequential completeness, realcompactness and a number of other properties are not (see \cite{brutka} for more examples).

The  twisted sum $0\to Y\stackrel{\imath}{\to} X \stackrel{\pi}{\to}Z\to 0$ {\em splits} if there exists a continuous linear map (continuous homomorphism) $T:X\to Y\times Z$ making the following diagram commutative ($\imath_Y$ is the canonical inclusion of $Y$ into the product, and $\pi_Z$ is the canonical projection).

\[
\xymatrix{
             &                                    & X\ar[dd]^T\ar[dr]^{\pi} &             & \\
 0\ar[r] & Y\ar[ur]^\imath\ar[dr]_{\imath_Y} &                                         & Z\ar[r] &0\\
            &                                    & Y\times Z\ar[ur]_{\pi_Z}                 &             &
}
\]

It is known that if such a $T$ exists, it must actually be a topological isomorphism.

 Of course, if the twisted sum $0\to Y\stackrel{\imath}{\to} X \stackrel{\pi}{\to}Z\to 0$  splits and  both $X$, $Z$ have a productive property $P,$ then $X$ has property $P$, too.

 Kalton, Peck and Roberts provided in \cite{kaltonsampler} the first formal and extensive study of splitting  twisted sums in the framework of {\em $F$-spaces} (complete metric linear spaces).
 They devote Chapter $5$ of this monograph to the following  problem: Is local convexity a 3-space property?

On the path to answering (in the negative) this question, the authors mention a useful result by Dierolf (1973) which asserts that there exists a non-locally-convex extension of  $Z$ by $Y$ if and only if there exists a non-locally-convex extension of $Z$ by $\R$. The analogue of this result for topological groups, which involves the notion of local quasi-convexity, was obtained by Castillo in \cite{castillo}.

At this point the following definition, originally introduced in \cite{kaltonpeck}, comes across as natural: An $F$-space $X$  is said to be a {\em $\mathcal K$--space}, if whenever $Y$ is an $ F$-space and $L$ is a subspace of $Y$ with dimension one such that $Y/L \cong X$, the corresponding twisted sum splits. The negative answer to the 3-space problem for local convexity is obtained in \cite{kaltonsampler} as a corollary of the fact that $\ell_1$ is not a $\mathcal K$--space.

The notion of $\mathcal K$--space is relevant on its own, regardless of 3-space properties. Many classical spaces such as $L_p$ ($0<p<\infty$), $l_p$ ($p\neq 1$), or $c_0$ are $\mathcal K$--spaces.

In this paper we will study the natural counterpart of the notion of $\mathcal K$--space for topological groups, and its connections with the 3-space problem, following the work started by Cabello in \cite{cabello2000},  \cite{cabellojmaa}, \cite{cabello}.

For simplicity, and because our methods are applicable for the most part only to
Abelian groups, we use additive notation, and denote by $0$ the neutral element.

We denote by $\N$ the set of all positive natural numbers, by $\Z$ the integers, by $\R$ the reals, by $\C$ the set of complex numbers and by $\T$  the unit circle of $\C$, with the topology induced by $\C$. In $\T$ we will use multiplicative notation and  we will denote by $p$ the canonical projection from $\R$ to $\T$ given by $p(t)=e^{2\pi it}$. We will  use  $\mathcal{N}_0(G)$ to denote the system of neighborhoods of $0$ in a  topological Abelian group $G$.
Recall that an Abelian topological group $G$ is precompact if for every neighborhood of zero $V$
there exists a finite subset $F$ of $G$ such that $G = F + V$. Precompact groups are the subgroups of compact groups. In the same way a group $G$ is locally precompact if and only if it is a subgroup of a locally compact group.

Recall that the {\em dual group $G^{\wedge}$} of a given Abelian topological group $G$ is formed from the continuous group homomorphisms from $G$ into $\mathbb{T}$, usually called {\em characters}.
The polar set of a subset $A$ of $G$ is defined by $A^0 := \{\chi \in G^\wedge : \chi (A)\subseteq \T_+\}$, where $\mathbb{T}_+ :=p([-1/4,1/4])=\{z\in \mathbb{T}:Re(z)\geq 0\}$. A subset $A$ of a topological group $G$ is called \emph{quasi-convex} if for every $x\in G\setminus A$ there is a $\chi \in A^0$ such that $\chi (x)\notin \T_+$.
A topological group is called \emph{locally quasi-convex} if it has a neighborhood basis of $0$ consisting of quasi-convex sets. It is well known (see \cite{Ban}) that a topological vector space is locally convex if and only if it is a locally quasi-convex topological group in its additive structure.

If $G^\wedge$ separates the points of $G$ we say that $G$ is {\em maximally almost periodic (MAP)}. Every locally quasi--convex group is a MAP group.

The analogous notion to the HBEP (Hahn-Banach Extension Property) for topological groups is the following:  A subgroup $H\leq X$ is {\em dually embedded} in $X$ if each character of $H$ can be extended to a character of $X$.

When endowed with the compact-open topology $ \tau_{co}$, $G^{\wedge}$ becomes a Hausdorff topological group. A basis of
neighborhoods of the neutral element for the  compact open topology $\tau_{co}$ is given by the sets $K^0 = \{\chi \in
G^{\wedge}: \chi(K) \subseteq \mathbb{T}_+\}$, where $K$ is a compact subset of $G$.

\begin{remark} Observe that a necessary condition for the splitting of
  the twisted sum of topological Abelian groups $0\to H\stackrel{\imath}{\to} X \stackrel{\pi}{\to}G\to 0$, is that  $i(H)$ be a dually embedded subgroup of $X$.
\end{remark}
The following known characterization is essential when dealing with twisted sums in different categories:
\begin{theorem}\label{theorem_section} Let $0\to H\stackrel{\imath}{\to} X \stackrel{\pi}{\to}G\to 0$ be a twisted sum of topological Abelian groups. The following are equivalent:
\begin{enumerate}
\item $0\to H\stackrel{\imath}{\to} X \stackrel{\pi}{\to}G\to 0$ splits.

\item There exists a continuous homomorphism $S:G\to X$ with $\pi\circ S={\rm id}_G,$ i.e. a right inverse for $\pi.$ 
\item There exists a continuous homomorphism $P:X\to H$ with $P\circ \imath={\rm id}_H,$ i.e. a left inverse for $\imath.$  

\end{enumerate}
\end{theorem}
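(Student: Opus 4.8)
The statement is the topological-group version of the classical splitting lemma, so the plan is to prove it as the cycle $(1)\Rightarrow(3)\Rightarrow(2)\Rightarrow(1)$. Throughout I use the two conditions built into the notion of twisted sum --- that $\imath$ is a topological embedding, so $\imath^{-1}\colon\imath(H)\to H$ is continuous for the subspace topology, and that $\pi$ is a continuous open surjection with $\ker\pi=\imath(H)$ --- together with the quoted fact that a map $T$ as in the definition of splitting is automatically a topological isomorphism. For $(1)\Rightarrow(3)$: if $T\colon X\to H\times G$ realizes the splitting, set $P:=p_H\circ T$, where $p_H\colon H\times G\to H$ is the first projection; commutativity of the diagram gives $P\circ\imath=p_H\circ\imath_H=\mathrm{id}_H$, and $P$ is continuous. (Symmetrically, $T^{-1}$ composed with the canonical inclusion $G\to H\times G$ gives the section of $(2)$, so $(1)\Rightarrow(2)$ as well, though we shall not need it directly.)

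For $(3)\Rightarrow(2)$: given a continuous $P$ with $P\circ\imath=\mathrm{id}_H$, consider $r\colon X\to X$, $r(x):=x-\imath(P(x))$. A direct check shows $r$ is a continuous idempotent endomorphism with image the subgroup $\ker P$, i.e. a continuous retraction of $X$ onto $\ker P$; moreover $\imath(H)\cap\ker P=\{0\}$ and $\pi\circ r=\pi$, so $\pi$ restricted to $\ker P$ is a continuous bijective homomorphism onto $G$. The point requiring care is that this restriction is again open: for $U$ open in $\ker P$ one has $r^{-1}(U)$ open in $X$ and $\pi(r^{-1}(U))=\pi(U)$, whence $\pi(U)$ is open in $G$ because $\pi$ is. Thus $\pi|_{\ker P}$ is a topological isomorphism, and composing its inverse with the inclusion $\ker P\hookrightarrow X$ produces the desired continuous section $S$.

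For $(2)\Rightarrow(1)$: given a continuous homomorphism $S$ with $\pi\circ S=\mathrm{id}_G$, observe that $\pi(x-S(\pi(x)))=0$ for all $x$, so $x-S(\pi(x))\in\imath(H)$, and define $P\colon X\to H$ by $P(x):=\imath^{-1}(x-S(\pi(x)))$. This is a well-defined homomorphism, continuous because $x\mapsto x-S(\pi(x))$ is continuous from $X$ into $\imath(H)$ and $\imath^{-1}$ is continuous there; a short computation yields $P\circ\imath=\mathrm{id}_H$ and $P\circ S=0$. Then $T\colon X\to H\times G$, $T(x):=(P(x),\pi(x))$, is a continuous homomorphism making the diagram commute, and $(h,g)\mapsto\imath(h)+S(g)$ is a continuous two-sided inverse of it, so $T$ is a topological isomorphism and the sum splits. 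The only non-formal ingredients are these continuity and openness checks, and they are precisely where the defining conditions on a twisted sum are used: openness of $\imath$ onto its image makes $P$ continuous in $(2)\Rightarrow(1)$, and openness of $\pi$ --- via the retraction, since restrictions of open maps to subgroups are not open in general --- yields the section in $(3)\Rightarrow(2)$, which I expect to be the main obstacle.
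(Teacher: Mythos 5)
Your proof is correct: the cycle $(1)\Rightarrow(3)\Rightarrow(2)\Rightarrow(1)$ is complete, and the two delicate points --- continuity of $P=\imath^{-1}\circ(\mathrm{id}_X-S\circ\pi)$ via openness of $\imath$ onto its image, and openness of $\pi|_{\ker P}$ via the identity $\pi(U)=\pi(r^{-1}(U))$ for the retraction $r=\mathrm{id}_X-\imath\circ P$ --- are handled properly. The paper states this theorem as a known characterization and gives no proof of its own, so there is nothing to compare against; your argument is the standard one for this splitting lemma, correctly adapted to the topological setting.
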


We will use the notions of pull-back and push-out in the category of topological Abelian groups, following Castillo \cite{castillo}:  Given topological Abelian groups $A,$ $B$ and $C$ and continuous homomorphisms $u:A\to B$ and $v:A\to C$, the push-out of $(u,v)$ is a topological group $PO$ and two continuous homomorphisms $r$ and $s$ making the diagram commutative
\[
\xymatrix{
A\ar[r]^{u}\ar[d]_v    &B \ar[d]^s \ar@/^/[rdd]^{s'}& \\
C\ar[r]_r \ar@/_/[rrd]_{r'}    &PO\ar@{-->}[rd]^{\phi} & \\
& & PO'
}
\]
and such that for every topological Abelian group $PO'$ and continuous homomorphisms $r':C\longrightarrow PO'$ and $s':B\longrightarrow PO'$  with $s'\circ u = r'\circ v$,  there is a unique continuous homomorphism $\phi$ from $PO$ to $PO'$ making the two triangles commutative.
The topological group $PO$ exits and is unique up to topological isomorphism.

Given any twisted sum $0\to H\stackrel{\imath}{\to} X \stackrel{\pi}{\to}G\to 0$ of topological Abelian groups, any topological Abelian group $Y$ and any continuous homomorphism $t:H\to Y,$ if $PO$ is the push-out of $\imath$ and $t,$ there is a commutative diagram

\begin{equation*}
\xymatrix{
0\ar[r]   &H\ar[r]^\imath\ar[d]^t       &X\ar[r]^\pi\ar[d]^s          &G\ar@{=}[d]\ar[r]  &0\\
0\ar[r]   &Y\ar[r]_r                    &PO\ar@{-->}[r]_p                         &G\ar[r]  &0
}
\end{equation*}
where
both squares are commutative and the bottom sequence is a twisted sum \cite{castillo}.

An analogous result for the dual construction (the pull-back) can be obtained (see \cite{castillo}).

\begin{lemma}\label{bowie} Let $K$ be a compact subgroup of a topological Abelian group $X$.  $X^{\wedge}$ separates points of $K$ if and only if $K$ is dually embedded in $X$.\end{lemma}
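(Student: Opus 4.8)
The plan is to establish the two implications separately. The implication ``$K$ dually embedded in $X$ $\Rightarrow$ $X^\wedge$ separates the points of $K$'' is the routine one: denote by $\rho\colon X^\wedge\to K^\wedge$, $\rho(\chi)=\rest{\chi}{K}$, the restriction homomorphism, and observe that saying $K$ is dually embedded in $X$ is exactly saying that $\rho$ is surjective. Now $K$ is a compact (hence locally compact) Abelian group, so it is MAP; that is, $K^\wedge$ separates the points of $K$. Combining this with the surjectivity of $\rho$: given $x\ne y$ in $K$, choose $\psi\in K^\wedge$ with $\psi(x)\ne\psi(y)$ and then $\chi\in X^\wedge$ with $\rest{\chi}{K}=\psi$; this $\chi$ separates $x$ and $y$. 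Hence $X^\wedge$ separates the points of $K$.

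The substantive implication is the converse, and the idea is to feed the hypothesis into Pontryagin--van Kampen duality for the compact group $K$. Assume $X^\wedge$ separates the points of $K$ and put $H:=\rho(X^\wedge)\le K^\wedge$. I want to show $H=K^\wedge$, which is precisely the surjectivity of $\rho$, i.e. that $K$ is dually embedded in $X$. Consider the annihilator $H^{\perp}:=\{x\in K:\psi(x)=1\text{ for every }\psi\in H\}$, a closed subgroup of $K$. The hypothesis that $X^\wedge$ separates the points of $K$ is literally the statement that $H$ separates the points of $K$, i.e. that $H^{\perp}=\{0\}$. Since $K$ is compact, $K^\wedge$ is discrete, so the subgroup $H$ is closed in $K^\wedge$; by the annihilator theorem of duality the double annihilator of a subgroup equals its closure, so $H=(H^{\perp})^{\perp}=\{0\}^{\perp}=K^\wedge$, where the outer annihilator is taken inside $K^\wedge$ under the canonical identification $K\cong(K^\wedge)^\wedge$. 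Equivalently, one may run this through the quotient: for the closed subgroup $H^{\perp}\le K$ one has $(K/H^{\perp})^{\wedge}\cong (H^{\perp})^{\perp}=H$ compatibly with the inclusion $H\hookrightarrow K^\wedge$, and since $H^{\perp}=\{0\}$ this gives $K^\wedge\cong H$, whence $H=K^\wedge$.

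The place that needs care — the ``hard part'' — is exactly this duality step in the converse: one must use that compactness of $K$ forces $K^\wedge$ to be discrete, and hence $H$ to be automatically closed, since otherwise the double-annihilator computation only returns $\overline{H}$ rather than $H$ itself. (As usual in this setting all groups are taken to be Hausdorff; note that under the standing hypothesis $X^\wedge$ separates the points of $K$, so $K$ is in any case Hausdorff.) Apart from this, the argument uses only the description of ``dually embedded'' in terms of surjectivity of the restriction map and the fact that compact Abelian groups are MAP, both of which are already available.
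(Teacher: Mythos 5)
Your proof is correct and follows essentially the same route as the paper: the easy implication is identical (surjectivity of the restriction map plus the fact that a compact Hausdorff group is MAP), and the substantive one rests in both cases on the discreteness of $K^\wedge$. The only cosmetic difference is that the paper cites the criterion ``a subgroup of the dual of an LCA group is dense iff it separates points'' where you invoke the equivalent double-annihilator theorem directly.
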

\begin{proof}Suppose that $X^\wedge$ separates points of $K$. It is known that for any locally compact Abelian group $G$, a subgroup $L\le G^{\wedge}$ is dense in $G^{\wedge}$ if and only if it separates points of $G$ (see Proposition 31 in \cite{morris}). The subgroup $L$ of $K^{\wedge}$ formed by all restrictions of characters of $X$ separates points of $K$ by hypothesis. Hence $L$ is dense in $K^{\wedge}$ and, as $K^{\wedge}$ is discrete, $L$ coincides with $K^{\wedge}.$

Suppose that $K$ is dually embedded in $X$. As $K$ is compact, it is a MAP group. Fix a nonzero $x\in K$. There exists $\chi \in K^\wedge$ such that $\chi(x)\not= 1$. Since $K$ is dually embedded in $X$, there exists an extension $\widetilde{\chi}\in G^\wedge$ of $\chi$ with $\widetilde{\chi} (x)=\chi(x)\not=1$.
\end{proof}
\begin{corollary}(\cite[Proposition 1.4]{BruPei}) Let $K$ be a compact subgroup of a topological Abelian group $X$. If $X$ has sufficiently many characters then $K$ is dually embedded in $X$.
\end{corollary}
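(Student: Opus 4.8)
The plan is to reduce this immediately to Lemma~\ref{bowie}. Recall that ``$X$ has sufficiently many characters'' is just another name for $X$ being a MAP group, i.e.\ $X^\wedge$ separates the points of $X$. The only thing to check is that this global separation property restricts to $K$.

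First I would fix two distinct points $x,y\in K$ (equivalently, a nonzero $x-y\in K$). Since $x\neq y$ are in particular distinct points of $X$ and $X^\wedge$ separates points of $X$, there is a character $\chi\in X^\wedge$ with $\chi(x)\neq\chi(y)$. As $x,y\in K$, this shows that the (a priori possibly small) family of characters of $X$ already separates the points of $K$; a fortiori $X^\wedge$ itself separates points of $K$.

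Now I would invoke Lemma~\ref{bowie}: for a compact subgroup $K\le X$, the condition ``$X^\wedge$ separates points of $K$'' is equivalent to ``$K$ is dually embedded in $X$''. The previous paragraph establishes the hypothesis, so the conclusion that $K$ is dually embedded in $X$ follows at once. No step here is a genuine obstacle — the content is entirely in Lemma~\ref{bowie}, and the corollary is simply the observation that a MAP ambient group supplies the separation hypothesis on every (compact) subgroup for free.
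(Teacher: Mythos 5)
Your argument is correct and is exactly the intended one: the paper states this corollary as an immediate consequence of Lemma~\ref{bowie}, and your observation that a point-separating $X^\wedge$ in particular separates the points of the compact subgroup $K$ is precisely the (one-line) reduction the paper has in mind. Nothing to add.
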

\section{The class $\ST$}
Next we consider the particular case in which the compact subgroup is $\T$. (Theorems \ref{mathsmath} and \ref{lqcase} below are implicitly proved in Theorem 4.1 in \cite{castillo}, but we prefer the present formulation.)
\begin{theorem}\label{mathsmath} Let $0\to \mathbb T\stackrel{\imath}{\to} X \stackrel{\pi}{\to}G\to 0$ be a twisted sum of topological Abelian groups. The following are equivalent:
\begin{enumerate}
\item The twisted sum $0\to \mathbb T\stackrel{\imath}{\to} X \stackrel{\pi}{\to}G\to 0$ splits.
\item  $X^{\wedge}$ separates points of $\imath(\mathbb T)$.
\item $\imath(\mathbb T)$ is dually embedded in $X$.
\end{enumerate}
\end{theorem}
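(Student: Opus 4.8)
The plan is to prove the cycle of implications $(1)\Rightarrow(2)\Rightarrow(3)\Rightarrow(1)$, where the first two implications are essentially formal and the last one carries the real content.

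First I would dispatch $(1)\Rightarrow(2)$: if the twisted sum splits, then by Theorem \ref{theorem_section} there is a continuous left inverse $P\colon X\to\T$ for $\imath$, i.e. $P\circ\imath=\mathrm{id}_\T$. This $P$ is in particular a character of $X$, and since $P(\imath(z))=z$ for all $z\in\T$, it separates any two distinct points of $\imath(\T)$; hence $X^\wedge$ separates points of $\imath(\T)$. The implication $(2)\Leftrightarrow(3)$ is immediate from Lemma \ref{bowie} applied to the compact subgroup $K=\imath(\T)$ of $X$ (here $\imath(\T)$ is compact because $\T$ is compact and $\imath$ continuous, and it is topologically isomorphic to $\T$ since $\imath$ is open onto its image).

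The substantive step is $(3)\Rightarrow(1)$, equivalently $(2)\Rightarrow(1)$. Assuming $X^\wedge$ separates points of $\imath(\T)$, I want to produce a continuous retraction $P\colon X\to\T$ onto $\imath(\T)$, and then invoke Theorem \ref{theorem_section}. The idea is to exploit the divisibility of $\T$: pick any nonzero $x_0\in\imath(\T)$ — for instance $x_0=\imath(p(1/2))=\imath(-1)$, the element of order $2$ — and, using the point-separating hypothesis together with the structure of $\T$, find a character $\chi\in X^\wedge$ whose restriction to $\imath(\T)$ is a \emph{topological automorphism} of $\imath(\T)\cong\T$. Concretely, the hypothesis gives some $\chi_0\in X^\wedge$ with $\chi_0(x_0)\neq 1$; then $\chi_0\restr{\imath(\T)}\colon\imath(\T)\to\T$ is a nonzero continuous homomorphism of $\T$, hence of the form $z\mapsto z^n$ for some nonzero $n\in\Z$ (via the identification $\imath(\T)\cong\T$). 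The remaining task is to correct the multiplicity $n$: since $\T$ is divisible one cannot simply take an $n$-th root of $\chi_0$ globally on $X$, so instead I would combine $\chi_0$ with a character coming from the splitting-type structure, or argue that on the compact group $\imath(\T)$ the map $z\mapsto z^n$ admits a continuous group-theoretic section $\T\to\T$ only when $|n|=1$ — which forces a more careful choice of $\chi_0$. The cleanest route is: the restriction map $X^\wedge\to\imath(\T)^\wedge\cong\Z$ has image a subgroup of $\Z$ containing a nonzero element by hypothesis, say generated by $d\geq 1$; one shows $d=1$, because otherwise $X^\wedge$ would fail to separate the point of order $d$ in $\imath(\T)$ from $0$. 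Hence some $\chi\in X^\wedge$ restricts to an isomorphism $\imath(\T)\to\T$, and $P:=\imath\circ(\chi\restr{\imath(\T)})^{-1}\circ\chi\colon X\to\imath(\T)\subseteq X$, viewed as a map into $\T$ via $\imath^{-1}$, is the desired continuous left inverse for $\imath$.

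I expect the main obstacle to be the verification that the image of the restriction homomorphism $X^\wedge\to\imath(\T)^\wedge\cong\Z$ is all of $\Z$ (not merely a nonzero subgroup): point-separation only gives, for each nonzero $x\in\imath(\T)$, \emph{some} character nonvanishing at $x$, and one must upgrade this to the existence of a single character restricting to a generator. The argument is that if $x$ has order $d$ (rational rotations) then a character $\chi$ with $\chi(x)\neq 1$ restricts to $z\mapsto z^m$ with $d\nmid m$; ranging over all torsion points of $\T$ forces the subgroup $\langle d\rangle$ of achievable multiplicities to be coprime to every $d\geq 2$, hence to equal $\Z$. Once that is in hand, the rest is the routine diagram-chase packaged in Theorem \ref{theorem_section}, together with the observation (already noted in the excerpt) that a left inverse for $\imath$ automatically yields the splitting isomorphism.
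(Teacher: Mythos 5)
Your proposal is correct, and the cycle $(1)\Rightarrow(2)\Rightarrow(3)\Rightarrow(1)$ with Theorem \ref{theorem_section} and Lemma \ref{bowie} as the main tools is exactly the paper's skeleton. The difference lies in how you produce the left inverse in the substantive implication. The paper argues from $(3)$ in one line: dual embeddedness means \emph{every} character of $\imath(\T)$ extends to $X$, so in particular the tautological isomorphism $\varphi(\imath(t))=t$ extends to some $\chi\in X^\wedge$, and $\chi\circ\imath=\mathrm{id}_\T$ is already the required left inverse. You instead argue from $(2)$, studying the image of the restriction homomorphism $X^\wedge\to\imath(\T)^\wedge\cong\Z$ and using the torsion points of $\T$ to force that image to be all of $\Z$, then inverting the resulting automorphism of $\imath(\T)$. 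Your argument is sound (the image is a subgroup $d\Z$; if $d\geq 2$ the point of order $d$ is not separated, and if $d=0$ nothing is separated), but it re-derives in this special case precisely the content of Lemma \ref{bowie} -- namely that a point-separating subgroup of the discrete dual $\Z$ must be dense, hence everything. Having already granted yourself $(2)\Leftrightarrow(3)$ via that lemma, you could have skipped the torsion analysis entirely and simply extended the identity character; what your route buys instead is a self-contained verification that does not lean on the density-of-separating-subgroups fact for locally compact groups. One small presentational point: the middle of your third paragraph entertains and discards several false starts (taking $n$-th roots, ``characters coming from the splitting-type structure'') before settling on the correct argument; in a final write-up only the ``cleanest route'' paragraph should survive.
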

\begin{proof} $2\Leftrightarrow 3$ was proved in Lemma \ref{bowie}. $2 \Rightarrow 1$: Suppose that $X^{\wedge}$ separates points of $\imath(\mathbb T)$. By Lemma \ref{bowie}, $\imath(\mathbb T)$ is dually embedded in $X$, hence there exists a continuous character $\chi:X\to \mathbb T$ which extends the isomorphism $\varphi:\imath(\mathbb T)\to \mathbb T$ defined by $\varphi(\imath(t))=t.$ Since $\chi\circ \imath=\text{id}_{\mathbb T},$ the assertion follows from Theorem \ref{theorem_section}. $1 \Rightarrow 2$: Fix $x\in \imath(\T)$, $x=\imath(z)$ with $z\not=1$. By  Theorem \ref{theorem_section} there exists a continuous homomorphism $ P:X \to \T$  with $P\circ \imath = \text{id}_\T$, hence $P(\imath(z))=\text{id}_\T (z)=z\not= 1$.
\end{proof}
Now we will complete the previous Theorem under the assumption that $G$ is locally quasi-convex. We will use the following result due to Castillo, concerning the 3-space
problem in locally quasi-convex groups.
\begin{lemma} \label{casti} (\cite[Theorem 2.1]{castillo}) Let $H$ be a locally quasi-convex subgroup of a topological Abelian group $X$ such that $X/H$ is locally quasi-convex. Then $X$ is locally quasi-convex if and only if $H$ is dually embedded in $X$.
\end{lemma}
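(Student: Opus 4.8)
The plan is to regard $0\to H\to X\stackrel{q}{\to}X/H\to 0$ as a twisted sum (the inclusion of a subgroup is automatically a topological embedding and $q$ is open) and to work with polars: for a subset $B$ of a topological Abelian group $G$ put $B^{0}=\{\gamma\in G^{\wedge}:\gamma(B)\subseteq\T_{+}\}$ and $B^{00}=\{x\in G:\gamma(x)\in\T_{+}\text{ for all }\gamma\in B^{0}\}$. I use freely that a subset of $G$ is quasi-convex exactly when it equals its bipolar, that $\gamma^{-1}(S)$ is quasi-convex whenever $\gamma\in G^{\wedge}$ and $S\subseteq\T$ is quasi-convex, and that $S:=p([-1/8,1/8])$ is a quasi-convex neighborhood of $1$ in $\T$ with $S\cdot S^{-1}=\T_{+}$; recall also that $G$ is locally quasi-convex precisely when every member of $\mathcal{N}_{0}(G)$ contains a quasi-convex neighborhood of $0$.

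$(\Leftarrow)$ Assume $H$ is dually embedded in $X$; I produce, for each $W\in\mathcal{N}_{0}(X)$, a quasi-convex neighborhood of $0$ contained in $W$. Choose a symmetric $W_{1}\in\mathcal{N}_{0}(X)$ with $W_{1}+W_{1}\subseteq W$. As $H$ is locally quasi-convex, $W_{1}\cap H$ contains a quasi-convex (in $H$) neighborhood $V$ of $0$. The key step is to find a single $N\in\mathcal{N}_{0}(X)$ together with, for every $\psi\in V^{0}\subseteq H^{\wedge}$, an extension $\widetilde{\psi}\in X^{\wedge}$ with $\widetilde{\psi}(N)\subseteq S$ \emph{uniformly in} $\psi$. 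Granting this, put $W_{2}:=N\cap W_{1}$; since $X/H$ is locally quasi-convex and $q$ is open, $q(W_{2})\in\mathcal{N}_{0}(X/H)$ contains a quasi-convex neighborhood $U$ of $0$, and $\Omega:=q^{-1}(U)=\bigcap_{\eta\in U^{0}}(\eta\circ q)^{-1}(\T_{+})$ is a quasi-convex neighborhood of $0$ in $X$ with $\Omega\subseteq W_{2}+H$. Now set
\[
Q:=\Omega\cap\bigcap_{\psi\in V^{0}}\widetilde{\psi}^{-1}(S).
\]
Then $Q$ is quasi-convex (an intersection of quasi-convex sets) and a neighborhood of $0$ (since $\Omega\cap N\subseteq Q$), and $Q\subseteq W$: given $x\in Q$, write $x=w_{2}+h$ with $w_{2}\in W_{2}$, $h\in H$; then $\widetilde{\psi}(w_{2})\in S$ because $w_{2}\in N$, and $\widetilde{\psi}(x)\in S$ by the definition of $Q$, so $\psi(h)=\widetilde{\psi}(h)=\widetilde{\psi}(x)\widetilde{\psi}(w_{2})^{-1}\in S\cdot S^{-1}=\T_{+}$ for every $\psi\in V^{0}$; hence $h\in V^{00}=V\subseteq W_{1}$ and $x=w_{2}+h\in W_{1}+W_{1}\subseteq W$.

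$(\Rightarrow)$ Assume $X$ is locally quasi-convex and fix $\chi\in H^{\wedge}$. Since $\chi$ is continuous and $X$ is locally quasi-convex, there is a quasi-convex neighborhood $B$ of $0$ in $X$ with $\chi(B\cap H)\subseteq\T_{+}$, that is $\chi\in(B\cap H)^{0}$. Let $\rho\colon X^{\wedge}\to H^{\wedge}$ be restriction. Using $B^{00}=B$ one computes $\rho(B^{0})^{0}=B^{00}\cap H=B\cap H$, whence $\rho(B^{0})^{00}=(B\cap H)^{0}\ni\chi$. Consequently it suffices to show that $\rho(B^{0})$ equals its own bipolar $\rho(B^{0})^{00}$ in $H^{\wedge}$, for then $\chi\in\rho(B^{0})$ and any $\gamma\in B^{0}$ with $\rho(\gamma)=\chi$ extends $\chi$. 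Equivalently, one must check that $\rho$ carries the quasi-convex set $B^{0}\subseteq X^{\wedge}$ onto a quasi-convex subset of $H^{\wedge}$; this is exactly where local quasi-convexity of $X/H$ is needed, through the structure of $\ker\rho=H^{\perp}\cong(X/H)^{\wedge}$.

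The main obstacle, common to both implications, is what makes mere dual embeddedness insufficient: one must pass from a single equicontinuous family of characters of $H$ (a polar $V^{0}$, respectively the set $\rho(B^{0})$) to an equicontinuous family of extensions to $X$ — i.e.\ show that every equicontinuous subset of $H^{\wedge}$ is the image under $\rho$ of an equicontinuous subset of $X^{\wedge}$. Local quasi-convexity of $X/H$ is what makes the ambiguity $H^{\perp}=(X/H)^{\wedge}$ in the choice of extensions controllable: one corrects an arbitrary continuous extension by a character of $X$ vanishing on $H$, chosen with the help of a suitable quasi-convex neighborhood of $0$ in $X/H$. Carrying this out is the technical core of the proof; the statement itself is \cite[Theorem 2.1]{castillo}, to which we refer for the full details.
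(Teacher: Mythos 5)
The paper does not prove this lemma at all: it is quoted verbatim from \cite[Theorem~2.1]{castillo} and used as a black box, so there is no internal argument to compare yours against. Judged on its own terms, your write-up is an outline rather than a proof. The computations you do carry out are correct (the identity $S\cdot S^{-1}=\T_+$ for $S=p([-1/8,1/8])$, the quasi-convexity of $\Omega=q^{-1}(U)$ and of $Q$, the estimate $Q\subseteq W_1+W_1\subseteq W$, and the bipolar computation $\rho(B^0)^{00}=(B\cap H)^0$), but in each direction the entire difficulty is concentrated in a step you state and then skip.

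In $(\Leftarrow)$, the ``key step'' --- a single $N\in\mathcal N_0(X)$ and extensions $\widetilde\psi$ of \emph{all} $\psi\in V^0$ with $\widetilde\psi(N)\subseteq S$ --- is not a consequence of dual embeddedness: that hypothesis produces extensions one character at a time, each continuous on its own neighborhood $N_\psi$, with no common $N$. Passing from pointwise extendability to an equicontinuous family of extensions is precisely the content of the theorem (without it, $Q$ need not be a neighborhood of $0$, and the whole construction collapses), and you give no argument for how local quasi-convexity of $X/H$ lets one correct the extensions by elements of $H^\perp$ to achieve it. In $(\Rightarrow)$, you reduce to the claim that $\rho(B^0)$ is quasi-convex in $H^\wedge$, but again offer no proof, only the remark that this is ``where local quasi-convexity of $X/H$ is needed''; it is not evident that this claim holds or that the stated hypothesis yields it. Since you explicitly defer ``the technical core'' to \cite{castillo}, the proposal is, like the paper itself, a citation rather than a proof --- which is acceptable for an imported result, but means the argument you do sketch cannot stand alone.
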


\begin{theorem}\label{lqcase} Let $0\to \mathbb T\stackrel{\imath}{\to} X \stackrel{\pi}{\to}G\to 0$ be a twisted sum of topological Abelian groups. Suppose that $G$ is locally quasi-convex. Then conditions  1, 2, 3 of the above theorem are equivalent to
\begin{itemize}
\item[4.] $X$ is locally quasi-convex.
\end{itemize}
\end{theorem}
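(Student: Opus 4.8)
The plan is to derive the new equivalence $3\Leftrightarrow 4$ straight from Castillo's Lemma~\ref{casti}, and then use Theorem~\ref{mathsmath} to fold it in with conditions 1, 2, 3. First I would record two structural observations about the twisted sum. Since $\imath$ is continuous and open onto its image, $\imath\colon\T\to\imath(\T)$ is a topological isomorphism, so $\imath(\T)$ is a \emph{compact} subgroup of $X$; being compact (and $X$ Hausdorff), it is also closed in $X$. Compact Hausdorff Abelian groups are locally quasi-convex (see \cite{Ban}), hence $\imath(\T)$ is a locally quasi-convex subgroup of $X$. Secondly, because $\pi$ is a continuous open surjection with $\ker\pi=\imath(\T)$ by exactness, the induced homomorphism $X/\imath(\T)\to G$ is a continuous open bijection, i.e.\ a topological isomorphism; consequently $X/\imath(\T)$ is locally quasi-convex, since $G$ is by hypothesis.

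With these two facts in place, Lemma~\ref{casti} applies verbatim to the subgroup $H=\imath(\T)$: it yields that $X$ is locally quasi-convex if and only if $\imath(\T)$ is dually embedded in $X$. But the right-hand condition is exactly statement~3 of Theorem~\ref{mathsmath}. Thus $4\Leftrightarrow 3$, and combining this with the chain $1\Leftrightarrow 2\Leftrightarrow 3$ already proved there, all four conditions are equivalent, which is what we want.

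I do not anticipate a real obstacle: the theorem is essentially Lemma~\ref{casti} specialized to the case $H\cong\T$, repackaged through Theorem~\ref{mathsmath}. The only points demanding (routine) care are the verifications that $\imath(\T)$ carries the topology of $\T$ — so that it is compact, closed, and locally quasi-convex — and that $\pi$ identifies $X/\imath(\T)$ topologically with $G$; both are immediate from the definition of a twisted sum and the open mapping built into it. If anything, the mild subtlety is simply to make explicit that the hypotheses of Lemma~\ref{casti} ($H$ locally quasi-convex, $X/H$ locally quasi-convex, and implicitly $H$ closed so that $X/H$ is Hausdorff) are all met here.
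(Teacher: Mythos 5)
Your proof is correct and follows essentially the same route as the paper: both arguments reduce the theorem to Castillo's Lemma~\ref{casti} applied to $H=\imath(\T)$ together with Theorem~\ref{mathsmath}. The only cosmetic difference is that the paper establishes $1\Rightarrow 4$ directly (a splitting makes $X$ a product of two locally quasi-convex groups) and uses Lemma~\ref{casti} only for $4\Rightarrow 1$, whereas you invoke both directions of Lemma~\ref{casti} to get $4\Leftrightarrow 3$; your added checks that $\imath(\T)$ is a compact, locally quasi-convex, closed subgroup and that $X/\imath(\T)\cong G$ topologically are exactly the implicit verifications the paper leaves to the reader.
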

\begin{proof}
 $1\Rightarrow 4$: If the twisted sum splits, $X$ is topologically isomorphic to the product of two locally quasi-convex grups, hence it is locally quasi convex. $4\Rightarrow 1:$ Given a twisted sum $0\to \mathbb T\stackrel{\imath}{\to} X \stackrel{\pi}{\to}G\to 0$ with $X$ and $G$ locally quasi-convex, since $G\cong X/\imath(\T)$, by Lemma \ref{casti} we deduce that $\imath(\T)$ is dually embedded in $X$, therefore by  Theorem \ref{mathsmath}  the twisted sum splits.
\end{proof}

Following the notation used by Doma\'nski \cite{domanski}, in the framework of topological vector spaces, we introduce the class $\ST$ which is the analogue of that of $\mathcal K$--spaces for topological Abelian groups.

\begin{definition}
We say that a topological Abelian group $G$  {\em belongs to  $\ST$} if  every twisted sum of topological Abelian groups  $0 \to \T \to X \to G \to 0$ splits.
\end{definition}

\begin{theorem}\label{l1magic}
Let $G$ be a topological vector space such that $G\in \ST$ as a topological group. Then $G$ is a $\mathcal K $-space.
\end{theorem}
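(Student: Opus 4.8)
The plan is to reduce the problem, via a push-out, to a lifting question for characters. Let $0\to L\stackrel{\imath}{\to}Y\stackrel{\pi}{\to}G\to 0$ be any twisted sum of topological vector spaces with $L$ one-dimensional; identifying $L$ with $\R$, it suffices to produce a continuous \emph{linear} left inverse of $\imath$, for then the topological-vector-space analogue of Theorem \ref{theorem_section} shows this twisted sum splits, which is exactly what is needed for $G$ to be a $\mathcal{K}$--space. The key idea is to push the sequence out along the universal covering homomorphism $p\colon\R\to\T$, $p(t)=e^{2\pi i t}$.

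First I would form the push-out $PO$ of $\imath\colon\R\to Y$ and $p\colon\R\to\T$ in the category of topological Abelian groups. By the push-out construction recalled above there is a commutative diagram with exact rows
\[
\xymatrix{
0\ar[r] & \R\ar[r]^{\imath}\ar[d]_{p} & Y\ar[r]^{\pi}\ar[d]^{s} & G\ar@{=}[d]\ar[r] & 0\\
0\ar[r] & \T\ar[r]_{r} & PO\ar[r]_{q} & G\ar[r] & 0
}
\]
in which the bottom row is a twisted sum of topological Abelian groups and $s\circ\imath=r\circ p$. Since $G\in\ST$, the bottom row splits, so by Theorem \ref{theorem_section} there is a continuous homomorphism $P\colon PO\to\T$ with $P\circ r=\mathrm{id}_{\T}$. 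Setting $\chi:=P\circ s\colon Y\to\T$ we get $\chi\circ\imath=P\circ r\circ p=p$; in other words $\chi$ is a continuous character of $Y$ whose restriction to $\imath(\R)$ is, after identification, the covering map $p$.

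Next I would lift $\chi$ through $p$. The topological vector space $Y$ is contractible — the map $(x,\lambda)\mapsto\lambda x$ is a continuous homotopy $Y\times[0,1]\to Y$ from the zero map to $\mathrm{id}_Y$ — hence simply connected, and it is path-connected and locally path-connected because the balanced (hence star-shaped) neighborhoods of $0$ form a basis and their translates cover $Y$. Consequently $\chi$ has a continuous lift $\widetilde{\chi}\colon Y\to\R$ with $p\circ\widetilde{\chi}=\chi$ and $\widetilde{\chi}(0)=0$. This lift is a homomorphism: for each pair $x,y\in Y$ the quantity $\widetilde{\chi}(x+y)-\widetilde{\chi}(x)-\widetilde{\chi}(y)$ lies in $\ker p=\Z$ since $\chi$ is a homomorphism, so it defines a continuous $\Z$-valued function on the connected space $Y\times Y$ and hence vanishes identically. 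Likewise $\widetilde{\chi}(\imath(t))-t\in\Z$ for all $t\in\R$, because $p(\widetilde{\chi}(\imath(t)))=\chi(\imath(t))=p(t)$; being a continuous $\Z$-valued function on $\R$ that vanishes at $0$, it is identically $0$, so $\widetilde{\chi}\circ\imath=\mathrm{id}_{\R}$.

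Finally, $\widetilde{\chi}\colon Y\to\R$ is a continuous additive map between real topological vector spaces, hence $\R$-linear, and it left-inverts $\imath$; by the topological-vector-space version of Theorem \ref{theorem_section} the sequence $0\to\R\to Y\to G\to 0$ splits, and therefore $G$ is a $\mathcal{K}$--space. I expect the lifting step to be the main point requiring care: it rests on the topological facts that a topological vector space is path-connected, locally path-connected and simply connected, so that the classical lifting criterion for the covering $p\colon\R\to\T$ applies (equivalently, one invokes the direct construction of a real-valued lift of a character of an arcwise connected, locally arcwise connected group). Once $\widetilde{\chi}$ is available, checking that it is a homomorphism restricting to the identity on $\imath(\R)$, and that continuity promotes additivity to linearity, is routine.
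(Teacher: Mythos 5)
Your proof is correct and follows essentially the same route as the paper: push out along $p\colon\R\to\T$, use $G\in\ST$ to split the bottom row, and convert the resulting character of $Y$ restricting to $p$ on $\imath(\R)$ into a continuous linear left inverse of $\imath$. The only difference is that the paper simply invokes the standard fact that every continuous character of a topological vector space has the form $x\mapsto\exp(2\pi i f(x))$ for some $f\in Y^{*}$, whereas you prove it from scratch via the covering-space lifting criterion; that argument is sound.
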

\begin{proof}
Suppose that $G\in \ST.$ Fix a twisted sum of topological vector spaces $0\to \mathbb R\stackrel{\imath}{\to} X \stackrel{\pi}{\to}G\to 0$. Recall that we denote by
$p:\R \to \T$ the canonical projection, defined by $p(r):=\exp(2\pi ir)$.
If we consider the push-out $PO$ of $p$ and $\imath$, we obtain a commutative diagram
\[
\xymatrix{
0\ar[r]   &\R\ar[r]^\imath\ar[d]^p   &X\ar[r]^\pi\ar[d]^s        &G\ar@{=}[d]\ar[r]  &0\\
0\ar[r]   &\T\ar[r]_r           &PO\ar[r]\ar@{-->}@/^{5mm}/[l]^\varphi                &G\ar[r]  &0
}
\]
where the bottom sequence is exact.  Since $G\in \ST,$ this sequence splits. Hence there exists a left inverse $\varphi$ for $r$. Then $\varphi\circ s\circ\imath=p$. Since $X$ is a topological vector space, $\varphi \circ s$ is of the form $x\mapsto \exp(2\pi i f(x))$ for some $f\in X^*$, a continuous linear functional. This clearly implies $f\circ \imath={\rm id}_{\mathbb R}$, i. e. $f$ is a left inverse for $\imath,$ and hence the top sequence splits, too. \end{proof}

\begin{theorem}(\cite{kaltonpeck}, \cite{ribe}, \cite{roberts})\label{eleuno} There is a short exact sequence of topological vector spaces and continuous, relatively open linear maps $0\to \mathbb R\stackrel{\imath}{\to} X \stackrel{\pi}{\to} G \to 0$ which does not split. In other words, $\ell_1$ is not a $\mathcal K$-space.
\end{theorem}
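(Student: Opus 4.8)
The plan is to exhibit a concrete non-splitting twisted sum $0\to\R\to X\to\ell_1\to 0$, following the Ribe--Kalton--Peck--Roberts construction. The group (in fact, vector space) $X$ will be built from a quasi-linear map, namely the Kalton--Peck type map $\Omega:\ell_1\to\R$ defined on finitely supported sequences by $\Omega(x)=\bigl(\sum_n |x_n|\bigr)\log\bigl(\sum_n|x_n|\bigr)-\sum_n |x_n|\log|x_n|$ (with the usual convention $0\log 0=0$), extended appropriately. One checks that $\Omega$ is \emph{quasi-linear}: there is a constant $C$ with $|\Omega(x+y)-\Omega(x)-\Omega(y)|\le C(\|x\|_1+\|y\|_1)$ for all $x,y$, while $\Omega$ is manifestly not uniformly close to any genuine linear functional (it is unbounded on the unit ball, e.g.\ on the vectors $\frac1k(e_1+\dots+e_k)$ it grows like $\log k$). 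One then sets $X=\R\oplus_\Omega\ell_1$, the vector space $\R\times\ell_1$ with the (non-locally-convex, metrizable) topology generated by the quasi-norm $\|(r,x)\|=|r-\Omega(x)|+\|x\|_1$; the maps $\imath(r)=(r,0)$ and $\pi(r,x)=x$ give the desired short exact sequence of topological vector spaces with continuous, relatively open maps.

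First I would verify the quasi-linearity estimate for $\Omega$ and deduce that $\|\cdot\|$ above is indeed an $F$-norm (the triangle inequality holds up to the constant $C$, which can be absorbed by passing to an equivalent $F$-norm, or one simply works with quasi-norms throughout), so that $X$ is a well-defined complete metrizable topological vector space. Next I would check that $\imath$ is a topological embedding onto a closed subspace and that $\pi$ is continuous and open — both are immediate from the shape of the quasi-norm, since $\|(r,0)\|=|r|$ and $\|\pi(r,x)\|_1=\|x\|_1\le\|(r,x)\|$. This gives the short exact sequence. Then, by Theorem~\ref{theorem_section}, the sequence splits if and only if there is a continuous linear left inverse $f:X\to\R$ for $\imath$; any linear left inverse has the form $f(r,x)=r-\phi(x)$ for some linear functional $\phi$ on $\ell_1$, and continuity of $f$ on $X$ is equivalent to $|\Omega(x)-\phi(x)|\le M\|x\|_1$ for some constant $M$ and all $x$.

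The heart of the argument — and the main obstacle — is therefore showing that $\Omega$ lies at \emph{infinite} distance from the space of linear functionals on $\ell_1$: there is no linear $\phi$ with $\sup_{\|x\|_1\le 1}|\Omega(x)-\phi(x)|<\infty$. This is where the specific combinatorics of the Kalton--Peck function enter. The standard way is to test against the vectors $x^{(k)}=\frac1k(e_1+\dots+e_k)$, for which $\|x^{(k)}\|_1=1$ and $\Omega(x^{(k)})=\log k$, together with sign-flipped variants: replacing each $e_j$ by $\varepsilon_j e_j$ with $\varepsilon_j=\pm1$ leaves $\|x^{(k)}\|_1$ and $\Omega$ unchanged but a linear $\phi$ would change, and averaging over signs forces $\sup_k |\Omega(x^{(k)})|\le M + (\text{average of }|\phi|\text{ over sign patterns})$, which one shows stays bounded while $\log k\to\infty$ — contradiction. (Alternatively one invokes Kalton's theorem that $\ell_1$, or equivalently $L_1$, admits a nontrivial centralizer / is not a $\mathcal K$-space, citing \cite{kaltonpeck}, \cite{ribe}, \cite{roberts}; since the theorem statement already attributes the result to those sources, for the purposes of this paper it suffices to recall the construction and cite them for the key estimate.)

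Finally I would record the translation back to the language of the paper: by the previous two steps, $X\notin$ (no splitting), so by Theorem~\ref{l1magic} read contrapositively — or more directly by Theorem~\ref{theorem_section} as above — $\ell_1$ is not a $\mathcal K$-space, which is exactly the assertion. I expect the routine parts (that $\|\cdot\|$ is an $F$-norm, that $\imath,\pi$ behave correctly, that linear left inverses have the stated form) to be short; the only genuinely substantive point is the unboundedness/distortion estimate for $\Omega$, and there I would lean on the cited literature rather than reproduce the full computation.
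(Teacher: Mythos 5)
The paper does not actually prove this theorem: it is stated as a classical result with references to Kalton--Peck, Ribe and Roberts, and no argument appears in the text. So any complete proof here goes beyond the paper; your outline (build a quasi-linear map on $\ell_1$ at infinite distance from the linear functionals, form the associated twisted sum, and translate non-approximability into non-splitting via Theorem~\ref{theorem_section}) is exactly the route of the cited papers and is the right plan. The reduction steps you describe --- that a linear left inverse of $\imath$ has the form $(r,x)\mapsto r-\phi(x)$ and that its continuity is equivalent to $|\Omega(x)-\phi(x)|\le M\|x\|_1$ --- are correct.

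However, the specific map you wrote down does not work, and the failure is not cosmetic. With $\Omega(x)=\|x\|_1\log\|x\|_1-\sum_n|x_n|\log|x_n|$ one has $\Omega(-x)=\Omega(x)$, so taking $y=-x$ in your claimed estimate $|\Omega(x+y)-\Omega(x)-\Omega(y)|\le C\bigl(\|x\|_1+\|y\|_1\bigr)$ yields $2|\Omega(x)|\le 2C\|x\|_1$ for all $x$; that is, the estimate, if true, would force $\Omega$ to be at distance $0$ from the zero functional and the whole construction would trivialize. In fact the estimate is false: for $x=t\cdot\frac1k(e_1+\cdots+e_k)$ one computes $\Omega(x)=t\log k$, so with $t=1/\log k$ we get $\|x\|_1\to 0$ while $\Omega(x)+\Omega(-x)=2$. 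Your $\Omega$ is absolutely homogeneous rather than homogeneous, is not quasi-linear (nor even a quasi-homomorphism in the sense of Definition~\ref{aren}, since $\Delta_\Omega(x,-x)$ does not tend to $0$), and the expression $|r-\Omega(x)|+\|x\|_1$ does not define a vector --- or even group --- topology: the points $\bigl(1,\tfrac{1}{\log k}\cdot\tfrac1k(e_1+\cdots+e_k)\bigr)$ tend to $0$ but their negatives do not. The correct Ribe functional is the homogeneous, odd map $F(x)=\sum_nx_n\log|x_n|-\bigl(\sum_nx_n\bigr)\log\bigl|\sum_nx_n\bigr|$, with no absolute values on the coefficients outside the logarithms; for this $F$ the quasi-linearity estimate does hold. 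But then your sign-averaging argument for non-approximability collapses, because it relies precisely on the sign-invariance $\Omega(x_\varepsilon)=\Omega(x)$ that must be given up. Replace it by the standard computation: $F(e_j)=0$ while $F(e_1+\cdots+e_k)=-k\log k$, so an estimate $|F(x)-\phi(x)|\le M\|x\|_1$ would give $|\phi(e_j)|\le M$ and hence $k\log k\le 2Mk$ for all $k$, a contradiction. With $F$ in place of $\Omega$ and this estimate in place of the averaging, the rest of your outline goes through.
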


Using Theorem \ref{l1magic} we deduce

\begin{corollary}\label{l1kindofmagic}
 $\ell_1\not\in \ST$.
\end{corollary}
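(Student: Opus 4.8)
The plan is to combine Theorem~\ref{l1magic} with Theorem~\ref{eleuno} by contraposition. Suppose, for the sake of contradiction, that $\ell_1 \in \ST$. Since $\ell_1$ is a topological vector space (indeed an $F$-space, hence in particular a topological group under addition), Theorem~\ref{l1magic} applies verbatim with $G = \ell_1$ and yields that $\ell_1$ is a $\mathcal{K}$-space.

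But Theorem~\ref{eleuno} asserts precisely that $\ell_1$ is \emph{not} a $\mathcal{K}$-space: there exists a non-splitting twisted sum $0 \to \R \to X \to \ell_1 \to 0$ of topological vector spaces with continuous, relatively open linear maps. This contradicts the conclusion of the previous paragraph, so the assumption $\ell_1 \in \ST$ is untenable, and we conclude $\ell_1 \notin \ST$.

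There is essentially no obstacle here: the corollary is a one-line deduction once the two cited results are in hand. The only point that deserves a moment's attention is the implicit compatibility of hypotheses---one should check that the twisted sum of topological vector spaces furnished by Theorem~\ref{eleuno} is in particular a twisted sum of topological abelian groups in the sense used in the definition of $\ST$ and in Theorem~\ref{l1magic} (the maps are continuous and open onto their images, and $\R$, $X$, $G$ are abelian topological groups under addition). This is immediate, since every continuous linear map between topological vector spaces is a continuous group homomorphism, and relative openness is the same condition in both categories. Hence the proof is complete.

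\begin{proof}
Suppose $\ell_1 \in \ST$. As $\ell_1$ is a topological vector space, Theorem~\ref{l1magic} implies that $\ell_1$ is a $\mathcal{K}$-space, contradicting Theorem~\ref{eleuno}.
\end{proof}
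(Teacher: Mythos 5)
Your proof is correct and is exactly the paper's argument: the paper deduces the corollary directly from Theorem~\ref{l1magic} applied contrapositively together with Theorem~\ref{eleuno}. Your extra remark on the compatibility of the two categories of twisted sums is a reasonable sanity check but adds nothing beyond what the paper takes as immediate.
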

\begin{remark}
 The above Corollary gives an example of a quotient $X/\mathbb T$ which is locally quasi--convex as a topological group but such that $X$ does not even separate points of $\mathbb T:$ a strong
failure of the 3-space property for   local quasi-convexity and for the property of being a MAP group.
\end{remark}

From Theorem \ref{mathsmath} it follows that a topological Abelian group $G$ is in $\ST$ if for every twisted sum of topological Abelian groups of the form $0\to \mathbb T\stackrel{\imath}{\to} X \stackrel{\pi}{\to}G\to 0$, the subgroup $\imath(\mathbb T)$ is dually embedded in $X$. Note that in any such twisted sum, for an arbitrary $A\le G$ the subgroup $\pi^{-1}(A)$ contains $\imath(\T).$ This yields the following criterion:

\begin{proposition}
\label{luzcasal2} Let $G$ be a topological Abelian group.
Suppose that there exists a subgroup $A\leq G$ which is in $\ST$ and satisfies the following property: For every twisted sum of topological Abelian groups of the form $0\to \mathbb T\stackrel{\imath}{\to} X \stackrel{\pi}{\to}G\to 0$, the subgroup $\pi^{-1}(A)$ is dually embedded in $X$. Then $G\in \ST$.
\end{proposition}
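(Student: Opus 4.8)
The plan is to verify condition~1 of Theorem~\ref{mathsmath} for an arbitrary twisted sum $0\to \T\stackrel{\imath}{\to} X \stackrel{\pi}{\to}G\to 0$ by producing a left inverse for $\imath$ and then invoking Theorem~\ref{theorem_section}. Since $\pi\circ\imath$ is trivial, we have $\imath(\T)\subseteq\ker\pi\subseteq\pi^{-1}(A)$, so it makes sense to restrict $\pi$ to $\pi^{-1}(A)$ and consider the sequence
\[
0\to \T\stackrel{\imath}{\to}\pi^{-1}(A)\stackrel{\pi'}{\to}A\to 0,
\]
where $\pi':=\rest{\pi}{\pi^{-1}(A)}$. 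First I would check that this is again a twisted sum in our sense: the map $\imath$ remains continuous and open onto its image because $\imath(\T)$ carries the same subspace topology whether viewed inside $X$ or inside $\pi^{-1}(A)$; the map $\pi'$ is continuous, surjective onto $A$ since $\pi$ is onto $G$, and has kernel $\pi^{-1}(A)\cap\ker\pi=\ker\pi=\imath(\T)$; finally $\pi'$ is open onto $A$, since for $V$ open in $X$ one has $\pi(V\cap\pi^{-1}(A))=\pi(V)\cap A$, which is open in the subspace topology of $A$ because $\pi(V)$ is open in $G$.

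Next, because $A\in\ST$ by hypothesis, the restricted twisted sum splits, so by Theorem~\ref{theorem_section} there is a continuous character $P\colon\pi^{-1}(A)\to\T$ with $P\circ\imath=\mathrm{id}_{\T}$. Now the second hypothesis on $A$ comes into play for the \emph{original} twisted sum: $\pi^{-1}(A)$ is dually embedded in $X$, hence $P$ extends to a continuous character $\widetilde{P}\in X^{\wedge}$. Then $\widetilde{P}\circ\imath=P\circ\imath=\mathrm{id}_{\T}$, so $\widetilde{P}$ is a left inverse for $\imath$ in $X$, and Theorem~\ref{theorem_section} yields that $0\to \T\to X\to G\to 0$ splits. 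Since the twisted sum was arbitrary, $G\in\ST$.

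The routine-but-essential point, and essentially the only place where any checking is needed, is confirming that $\pi'\colon\pi^{-1}(A)\to A$ is open onto its image, so that the restricted sequence genuinely is a twisted sum; the rest is a formal chain of character extensions. (Alternatively one can bypass Theorem~\ref{theorem_section} and argue directly through Theorem~\ref{mathsmath}: the splitting of the restricted sequence gives that $\imath(\T)$ is dually embedded in $\pi^{-1}(A)$, which combined with $\pi^{-1}(A)$ being dually embedded in $X$ shows $\imath(\T)$ is dually embedded in $X$, and another application of Theorem~\ref{mathsmath} finishes the proof.)
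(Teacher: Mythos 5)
Your argument is correct and is essentially the proof the paper intends: the paper only sketches it in the paragraph preceding the proposition (noting $\imath(\T)\subseteq\pi^{-1}(A)$ and appealing to Theorem \ref{mathsmath}), and your restriction of the twisted sum to $\pi^{-1}(A)\to A$, followed by splitting over $A$ and extending the resulting character via the dual embedding of $\pi^{-1}(A)$ in $X$, is exactly that route, with the openness of $\pi'$ checked carefully.
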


\begin{corollary} \label{opensbg}
Let $G$ be a topological Abelian group.
Suppose that there exists an open subgroup $A\leq G$ such that $A\in \ST$. Then $G\in \ST$.
\end{corollary}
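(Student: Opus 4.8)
The plan is to deduce this immediately from Proposition \ref{luzcasal2}, applied to the given open subgroup $A$. By hypothesis $A\in\ST$, so the only thing that needs checking is the second requirement in that proposition: for every twisted sum of topological Abelian groups $0\to \mathbb T\stackrel{\imath}{\to} X \stackrel{\pi}{\to}G\to 0$, the subgroup $\pi^{-1}(A)$ is dually embedded in $X$.

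First I would note that $\pi^{-1}(A)$ is an \emph{open} subgroup of $X$: it is a subgroup because $A$ is a subgroup of $G$ and $\pi$ is a homomorphism, and it is open because $A$ is open in $G$ and $\pi$ is continuous. So the whole argument reduces to the general (and elementary) fact that every open subgroup $H$ of a topological Abelian group $X$ is dually embedded. Given $\chi\in H^\wedge$, the divisibility of $\mathbb T$ makes it an injective object in the category of Abelian groups, so $\chi$ extends to a (a priori only algebraic) homomorphism $\widetilde\chi:X\to\mathbb T$. Since $H$ is open we have $H\in\mathcal N_0(X)$, and $\widetilde\chi$ coincides on $H$ with the continuous character $\chi$; as a group homomorphism which is continuous on some neighborhood of $0$ is continuous everywhere, $\widetilde\chi\in X^\wedge$. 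Taking $H=\pi^{-1}(A)$ gives what we want.

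I do not expect any real obstacle here; the only points requiring (routine) care are the algebraic extension step via injectivity of $\mathbb T$ and the standard remark that a homomorphism continuous at $0$ relative to an open subgroup is globally continuous. Once $\pi^{-1}(A)$ is known to be dually embedded in $X$ for every such twisted sum, and since $A\in\ST$, Proposition \ref{luzcasal2} applies verbatim and yields $G\in\ST$.
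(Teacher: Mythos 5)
Your proof is correct and follows exactly the same route as the paper: apply Proposition \ref{luzcasal2} to $A$, observe that $\pi^{-1}(A)$ is an open subgroup of $X$ by continuity of $\pi$, and conclude that it is dually embedded. The paper simply cites the standard fact that open subgroups are dually embedded without proof, whereas you supply the (correct) routine argument via injectivity of $\mathbb T$ and continuity on a neighborhood of $0$.
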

\begin{proof} If $A$ is an open subgroup of $G$, with the notation of \ref{luzcasal2}, $\pi^{-1}(A)$ is an open subgroup of $X$, hence dually embedded.
\end{proof}
\begin{corollary}\label{desdedense} Let $G$ be a topological Abelian group.
Suppose that there exists a dense subgroup $A\leq G$ such that $A\in \ST$. Then $G\in \ST$.
\end{corollary}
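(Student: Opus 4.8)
The plan is to apply Proposition \ref{luzcasal2} with the given dense subgroup $A$. Since $A\in\ST$ holds by hypothesis, the only thing left to verify is that for \emph{every} twisted sum of topological Abelian groups $0\to \mathbb T\stackrel{\imath}{\to} X \stackrel{\pi}{\to}G\to 0$ the subgroup $\pi^{-1}(A)$ is dually embedded in $X$; Proposition \ref{luzcasal2} will then deliver $G\in\ST$ at once.

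First I would check that $\pi^{-1}(A)$ is dense in $X$. This uses only that $\pi$ is a continuous, surjective, open homomorphism: given a nonempty open $U\subseteq X$, the set $\pi(U)$ is a nonempty open subset of $G$, hence meets $A$, and picking $u\in U$ with $\pi(u)\in A$ gives a point of $U\cap\pi^{-1}(A)$. Next I would invoke the standard fact that a continuous character of a dense subgroup extends to a continuous character of the ambient group: every continuous homomorphism between topological Abelian groups is uniformly continuous for the natural uniformities, and $\mathbb T$, being compact, is a complete Hausdorff group, so a continuous character $\chi:\pi^{-1}(A)\to\mathbb T$ extends uniquely to a uniformly continuous map $\widetilde\chi:X\to\mathbb T$; since $\pi^{-1}(A)$ is dense and $\mathbb T$ is Hausdorff, $\widetilde\chi$ is automatically a homomorphism (the locus where it fails to be additive is closed and misses the dense set $\pi^{-1}(A)\times\pi^{-1}(A)$). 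Thus $\pi^{-1}(A)$ is dually embedded in $X$, and Proposition \ref{luzcasal2} applies.

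In truth there is no serious obstacle here: the statement is essentially a repackaging of Proposition \ref{luzcasal2} together with the completeness of $\mathbb T$. The only points needing (routine) care are the density of $\pi^{-1}(A)$ in $X$ and the extension of characters from a dense subgroup, both of which are elementary consequences of $\pi$ being open and of $\mathbb T$ being a compact — hence complete — group.
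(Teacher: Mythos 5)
Your proof is correct and follows essentially the same route as the paper's: apply Proposition \ref{luzcasal2} after observing that $\pi^{-1}(A)$ is dense in $X$ (your use of the openness of $\pi$ here is the right justification) and that dense subgroups are dually embedded. The only difference is that you spell out the standard uniform-continuity/completeness argument for extending characters from a dense subgroup, which the paper takes for granted.
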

\begin{proof} If $A$ is a dense subgroup of $G$, with the notation of \ref{luzcasal2}, $\pi^{-1}(A)$ is an dense subgroup of $X$ because $\pi$ is continuous and onto. In particular it is dually embedded in $X$.
\end{proof}

We next see that the converse of Corollary \ref{desdedense} is true in the metrizable case. For any topological Abelian group $G$, we denote by $\widetilde{G}$  the Raikov completion of $G$. See \cite{warner} for more information about this subject.

\begin{lemma}(\cite[Theorem 6.11]{warner})\label{nogishe}
If $G$ is a metrizable topological group that has a completion $\widetilde{G}$ and if $H$ is a closed normal subgroup of $G$, then $G/H$ has a completion that is topologically isomorphic to $\widetilde{G}/\widetilde{H}$, where $\widetilde{H}$ is the closure of $H$ in $\widetilde{G}$ .
\end{lemma}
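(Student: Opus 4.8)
The plan is to exhibit $\widetilde{G}/\widetilde{H}$ as a Hausdorff complete metrizable group containing a dense topological copy of $G/H$; by uniqueness of the Raikov completion this forces $\widetilde{G}/\widetilde{H}$ to be (topologically isomorphic to) $\widetilde{G/H}$. For the preliminaries: since $G$ is dense in $\widetilde{G}$ and $H$ is normal in $G$, its closure $\widetilde{H}$ is a closed normal subgroup of $\widetilde{G}$, so $\widetilde{G}/\widetilde{H}$ is a Hausdorff topological group; it is metrizable because $\widetilde{G}$ is (an invariant metric of $G$ extends to $\widetilde{G}$) and the quotient of a metrizable group by a closed subgroup is metrizable. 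Moreover $\widetilde{G}$ is complete and metrizable, and the quotient of a complete metrizable group by a closed normal subgroup is again complete; hence $\widetilde{G}/\widetilde{H}$ is Raikov complete.

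Let $q\colon G\to G/H$ and $\widetilde{q}\colon\widetilde{G}\to\widetilde{G}/\widetilde{H}$ be the quotient maps. Composing the inclusion $G\subseteq\widetilde{G}$ with $\widetilde{q}$ gives a continuous homomorphism which annihilates $H$ (since $H\subseteq\widetilde{H}$), hence factors as $j\circ q$ for a homomorphism $j\colon G/H\to\widetilde{G}/\widetilde{H}$. I would then check: $j$ is injective, because $\widetilde{H}\cap G=\overline{H}^{\widetilde{G}}\cap G=\overline{H}^{G}=H$ (here $H$ is closed in $G$ and $G$ carries the subspace topology of $\widetilde{G}$); $j$ is continuous, since $\widetilde{q}$ restricted to $G$ is continuous and $q$ is open; and $j$ has dense image, since $G$ is dense in $\widetilde{G}$ and $\widetilde{q}$ is continuous and onto.

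The substantive point, and the step I expect to be the main obstacle, is that $j$ is open onto its image. Fix a basic neighbourhood $q(\widetilde{V}\cap G)$ of $0$ in $G/H$, with $\widetilde{V}$ an open neighbourhood of $0$ in $\widetilde{G}$, and choose a symmetric open $\widetilde{W}$ with $\widetilde{W}+\widetilde{W}\subseteq\widetilde{V}$. I claim $\widetilde{q}(\widetilde{W})\cap j(G/H)\subseteq j(q(\widetilde{V}\cap G))$: an element of the left-hand side has the form $\widetilde{q}(w)=\widetilde{q}(g)$ with $w\in\widetilde{W}$ and $g\in G$, so $g-w\in\widetilde{H}=\overline{H}^{\widetilde{G}}$ and, by density of $H$ in $\widetilde{H}$, the neighbourhood $(g-w)+\widetilde{W}$ of $g-w$ meets $H$, giving $h\in H$ with $g\in h+\widetilde{W}+\widetilde{W}\subseteq h+\widetilde{V}$; then $g-h\in\widetilde{V}\cap G$ and $\widetilde{q}(w)=\widetilde{q}(g)=j(q(g))=j(q(g-h))\in j(q(\widetilde{V}\cap G))$. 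Since the sets $q(\widetilde{V}\cap G)$ form a neighbourhood basis at $0$ in $G/H$, this together with continuity of $j$ shows that $j$ is a topological embedding.

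Finally, $\widetilde{G}/\widetilde{H}$ is a complete group having $j(G/H)\cong G/H$ as a dense subgroup, so it is a Raikov completion of $G/H$, which is the assertion. A shorter but less self-contained alternative is to use the universal property of $\widetilde{G/H}$ to extend $q$ to a continuous homomorphism $\Phi\colon\widetilde{G}\to\widetilde{G/H}$, to identify $\ker\Phi$ with $\widetilde{H}$ by the same density argument, and then to appeal to the open mapping theorem for completely metrizable groups to conclude that the induced bijective continuous homomorphism $\widetilde{G}/\widetilde{H}\to\widetilde{G/H}$ is a topological isomorphism; the neighbourhood computation above is, however, what makes the proof elementary.
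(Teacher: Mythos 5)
Your argument is correct. Note, however, that the paper offers no proof of this lemma at all: it is quoted verbatim from Warner's \emph{Topological Fields} (Theorem 6.11), so there is nothing internal to compare against. Your proof is a sound, essentially self-contained derivation: the identification $\widetilde{H}\cap G=H$ gives injectivity of $j$, the density of $\widetilde{q}(G)$ is immediate, and the key computation showing $\widetilde{q}(\widetilde{W})\cap j(G/H)\subseteq j(q(\widetilde{V}\cap G))$ (using $\widetilde{W}+\widetilde{W}\subseteq\widetilde{V}$ and the density of $H$ in $\widetilde{H}$) correctly establishes that $j$ is open onto its image, so $G/H$ embeds densely in $\widetilde{G}/\widetilde{H}$ and uniqueness of the Raikov completion finishes the job. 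The only ingredient you import without proof is that a quotient of a Raikov-complete metrizable group by a closed normal subgroup is again complete; this is a genuinely nontrivial classical fact (it fails without metrizability), so if you want the proof fully self-contained you should either prove it (lift a Cauchy sequence in $\widetilde{G}/\widetilde{H}$ along the open map $\widetilde{q}$ using a rapidly converging subsequence) or cite it explicitly, but invoking it as a known theorem is entirely reasonable here.
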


\begin{proposition} Let $G$ be a  metrizable topological Abelian group which is in $\ST$. Suppose that $A$ is a dense subgroup of $G$. Then $A$ is in $\ST$, too.
\end{proposition}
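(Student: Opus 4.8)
The plan is to reduce, via Raikov completion, the splitting of an arbitrary twisted sum over the dense subgroup $A$ to the splitting of a twisted sum over $\widetilde{A}$, and then to observe that $\widetilde{A}=\widetilde{G}$, which lies in $\ST$ because $G$ does.

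Concretely, I would fix an arbitrary twisted sum $0\to\T\stackrel{\imath}{\to}X\stackrel{\pi}{\to}A\to 0$ and aim to verify condition~2 of Theorem~\ref{mathsmath}, namely that $X^{\wedge}$ separates the points of $\imath(\T)$. First I would note that $A$, being a subgroup of the metrizable group $G$, is metrizable and Hausdorff; hence $\imath(\T)=\ker\pi=\pi^{-1}(\{0\})$ is closed in $X$, so $X$ is Hausdorff, and since metrizability is a $3$-space property while $\imath(\T)\cong\T$ and $X/\imath(\T)\cong A$ are metrizable, $X$ itself is metrizable. Thus $X$ has a Raikov completion $\widetilde X$, into which $X$ embeds as a dense topological subgroup. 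Because $\imath(\T)$ is compact it stays closed in $\widetilde X$ and equals its own closure there, and $\imath$ remains open onto its image as a map into $\widetilde X$ since $X$ carries the subspace topology. Lemma~\ref{nogishe} then identifies the completion of $A\cong X/\imath(\T)$ with $\widetilde X/\imath(\T)$, so composing the quotient map $\widetilde X\to\widetilde X/\imath(\T)$ with this identification yields a twisted sum $0\to\T\to\widetilde X\to\widetilde A\to 0$ extending the original one.

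Next I would invoke $G\in\ST$. Since $A$ is dense in $G$ and $G$ is dense in $\widetilde G$, the subgroup $A$ is dense in $\widetilde G$, so by uniqueness of completions $\widetilde A=\widetilde G$; and since $G$ is a dense subgroup of $\widetilde G$ belonging to $\ST$, Corollary~\ref{desdedense} gives $\widetilde G\in\ST$. Hence $\widetilde A\in\ST$ and the completed twisted sum splits, so by Theorem~\ref{mathsmath} the group $\widetilde X^{\wedge}$ separates the points of $\imath(\T)$. Restricting each such character to $X$ (using that $\imath(\T)\subseteq X$ and that $X$ carries the subspace topology of $\widetilde X$) shows that $X^{\wedge}$ separates the points of $\imath(\T)$, whence the original sequence splits by Theorem~\ref{mathsmath}; as it was arbitrary, $A\in\ST$.

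The routine parts (metrizability and Hausdorffness of $X$, restriction of characters, the identification $\widetilde A=\widetilde G$) are immediate. The one point requiring care — the main obstacle — is checking that the completed row $0\to\T\to\widetilde X\to\widetilde A\to 0$ is genuinely a twisted sum in the sense of this paper, i.e. exact with both maps open onto their images. This rests on Lemma~\ref{nogishe} together with the facts that $X$ embeds topologically and densely in $\widetilde X$, that $\imath(\T)$ retains its topology and remains a closed subgroup there, and that quotient maps by subgroups of topological groups are open.
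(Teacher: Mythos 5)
Your proposal is correct and follows essentially the same route as the paper: complete $X$, use Lemma~\ref{nogishe} and the compactness of $\imath(\T)$ to identify $\widetilde{X}/\imath(\T)$ with $\widetilde{A}=\widetilde{G}$, invoke Corollary~\ref{desdedense} to get $\widetilde{G}\in\ST$, and then transfer the dual embedding (equivalently, point separation on $\imath(\T)$) from $\widetilde{X}$ back down to $X$. The paper's version is just terser about the routine verifications you spell out.
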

\begin{proof} Let $X$ be a topological Abelian group and let $\T \cong L\le X$ be a subgroup such that $A\cong X/L.$ Since metrizability is a 3-space property (\cite[5.38(e)]{hr}), $X$ is a metrizable group. By Lemma \ref{nogishe}, $\widetilde{G}=\widetilde{A}\cong \widetilde{X}/L.$ By Corollary \ref{desdedense}, $\widetilde{G}\in \ST$. Thus $L$ is dually embedded in $\widetilde{X}$, hence it is dually embedded in $X,$ too.
\end{proof}

Our next aim is to prove that locally precompact groups are in $\ST.$ Note first that if $G$ is a topological Abelian group and  $H\le G$  is a precompact subgroup such that the quotient $G/H$ is locally precompact, then $G$ is locally precompact, too. Indeed, let $\pi:G\to G/H$ be the canonical projection. Choose $U\in \mathcal{N}_0(G)$ such that $\pi(U)$ is precompact. Let us see that $U$ is precompact. Given $V\in \mathcal{N}_0(G)$ we need to find a finite subset $F\subset G$ with $U\subset F+V.$ Fix $V'\in \mathcal{N}_0(G)$ with $V'+V'\subset V$ and find a finite $F_1\subset G$ with $H\subset F_1+V'.$ Since $\pi(U)$ is precompact there exists a finite subset $F_2'$ of $G/H$ with $\pi(U)\subset F_2'+\pi(V').$ We may suppose that $F_2'=\pi(F_2)$ being $F_2$ a finite subset of $G$. Hence $\pi(U)\subset \pi(F_2)+\pi(V')=\pi(F_2+V')$, which implies $U\subset\pi^{-1}\pi(U)\subset \pi^{-1}\pi(F_2+V')=F_2+V'+H\subset F_1+F_2+V'+V'\subset F+V$ if we put $F=F_1+F_2.$

\begin{theorem}
Locally precompact Hausdorff Abelian groups belong to  $\ST$.
\end{theorem}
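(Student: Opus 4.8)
The plan is to reduce the problem to the compact case and then invoke Theorem \ref{mathsmath}. Let $G$ be a locally precompact Hausdorff Abelian group and let $0\to \T\stackrel{\imath}{\to} X \stackrel{\pi}{\to}G\to 0$ be a twisted sum. Since $\imath(\T)$ is compact and hence precompact in $X$, and $X/\imath(\T)\cong G$ is locally precompact, the observation preceding the theorem shows that $X$ is itself locally precompact. Now I would pass to Raikov completions: $\widetilde{X}$ is locally compact (the completion of a locally precompact group is locally compact), $\imath(\T)$ is already complete, so it sits as a compact subgroup of $\widetilde{X}$. By Theorem \ref{mathsmath} it suffices to show $\widetilde{X}^{\wedge}$ separates points of $\imath(\T)$; indeed, once $\imath(\T)$ is dually embedded in $\widetilde{X}$, the restriction of the extended characters to $X$ shows it is dually embedded in $X$, and another application of Theorem \ref{mathsmath} gives the splitting.

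So the core reduces to the following assertion: a compact subgroup of a locally compact Abelian group is dually embedded, equivalently (by Lemma \ref{bowie}) the dual of a locally compact Abelian group separates the points of any compact subgroup. This is where I would use classical Pontryagin duality: a locally compact Abelian group is MAP (its characters separate all of its points), which is a standard structural fact (it follows, e.g., from the Peter--Weyl theorem applied to a compact open subgroup together with the structure theorem, or directly from the fact that $\widehat{\widehat{G}}\cong G$). In particular $\widetilde{X}^{\wedge}$ separates the points of $\widetilde{X}$, hence a fortiori the points of $\imath(\T)$, and we are done. Alternatively one can avoid completions entirely: a compact subgroup $K$ of any topological group has the property that characters of the ambient group separate its points provided the ambient group has "sufficiently many characters," but locally precompact groups need not a priori be MAP, so the completion step seems to be the cleanest route.

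The main obstacle, and the only place requiring genuine care, is verifying that the Raikov completion of a locally precompact group is locally compact and that $\imath(\T)$, being compact, is not enlarged when passing to $\widetilde{X}$. The first is essentially the definition of locally precompact (a group is locally precompact iff it embeds as a dense subgroup of a locally compact group, namely its completion); the second holds because compact subsets of a Hausdorff group are closed and complete, so $\imath(\T)$ coincides with its closure in $\widetilde{X}$. Once these two routine facts are in place, the chain Lemma \ref{bowie} $\to$ local compactness of $\widetilde{X}$ $\to$ Pontryagin MAP property $\to$ Theorem \ref{mathsmath} closes the argument. I would also remark that by Corollary \ref{opensbg} it would in fact suffice to treat precompact $G$ directly, since locally precompact groups have a precompact open subgroup is \emph{not} generally true --- so the completion argument above is preferable to any such reduction.
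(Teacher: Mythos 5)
Your argument is correct and follows essentially the same route as the paper: show $X$ is locally precompact, pass to the locally compact completion $\widetilde X$, conclude $\imath(\T)$ is dually embedded there, restrict back to $X$, and apply Theorem \ref{mathsmath}. The only cosmetic difference is that the paper cites the fact that \emph{every} subgroup of a locally compact Abelian group is dually embedded, while you derive the compact-subgroup case from the MAP property of locally compact groups via Lemma \ref{bowie}; both are standard and equally valid.
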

\begin{proof} Let $G$ be a locally precompact Abelian group. Given a twisted sum  $0\to \mathbb T\stackrel{\imath}{\to} X \stackrel{\pi}{\to}G\to 0$, as $\T$ is in particular precompact, $X$ is locally precompact.  But every subgroup of a locally compact group is dually embedded  \cite{hr}, so $\imath(\T)$ is dually embedded in $\widetilde X$, the completion of $X$. Then $\imath(\T)$ is also dually embedded in $X$ and by  Theorem \ref{mathsmath} the twisted sum splits.
\end{proof}
\begin{corollary}\label{theo_precompact}\label{theo_loc_compact}Locally compact Hausdorff Abelian groups and precompact Hausdorff Abelian groups belong to  $\ST$.


\end{corollary}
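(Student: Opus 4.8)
The plan is to observe that this corollary is an immediate consequence of the preceding theorem, since both the locally compact Hausdorff Abelian groups and the precompact Hausdorff Abelian groups form subclasses of the locally precompact Hausdorff Abelian groups. Thus the whole argument reduces to verifying these two inclusions, each of which is immediate from the definitions.

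First I would recall that a Hausdorff Abelian group $G$ is locally compact precisely when it has a neighborhood of $0$ with compact closure; since compact subsets are in particular precompact, such a $G$ is locally precompact, and the previous theorem yields $G\in\ST$. Second, if $G$ is precompact then $G$ itself is a precompact neighborhood of $0$, so $G$ is (trivially) locally precompact, and again the previous theorem gives $G\in\ST$.

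There is essentially no obstacle here: the only point worth stating explicitly is that the two named classes sit inside the class of locally precompact groups, which is where the (already proved) content resides. If one preferred a self-contained phrasing, one could instead argue directly as in the proof of the previous theorem: for $G$ locally compact (respectively precompact) and any twisted sum $0\to\T\stackrel{\imath}{\to} X\stackrel{\pi}{\to}G\to 0$, the group $X$ is locally compact (respectively precompact) because $\T$ is compact and local compactness (respectively precompactness) is a $3$-space property; then $\imath(\T)$, being a subgroup of the locally compact group $\widetilde{X}$, is dually embedded in $\widetilde{X}$ and hence in $X$, so Theorem \ref{mathsmath} shows the twisted sum splits.
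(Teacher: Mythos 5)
Your proposal is correct and matches the paper's (implicit) argument: the corollary is stated without proof precisely because locally compact and precompact Hausdorff groups are both locally precompact, so the preceding theorem applies directly. Your optional self-contained rephrasing also just reproduces the proof of that theorem, so there is nothing genuinely different here.
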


\begin{remark}
It is proved in \cite{domanski} that every topological vector space endowed with its weak topology is a $\mathcal{K}$-space. The above corollary shows that a similar result is true for topological Abelian groups, since a topological group endowed with the topology induced by its characters is precompact (see \cite{CR}).
\end{remark}

 \begin{theorem}\label{quotientst} Let $G$ be a topological Abelian group.
 \begin{enumerate}
 \item
If  a closed subgroup $H\leq G$  is such that $G/H \in \ST$, then $H$ is dually embedded.
\item
If  $G$ is in $\ST$ and $H\le G$ is a closed dually embedded subgroup, then $G/H\in \ST$.
 \end{enumerate}
\end{theorem}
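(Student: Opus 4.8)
The plan is to treat the two statements separately, using respectively the push-out and the pull-back constructions recalled in the preliminaries.

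\emph{Part (1).} Fix a character $\chi\in H^{\wedge}$; the goal is to extend it to a character of $G$. Start from the twisted sum $0\to H\stackrel{\imath}{\to}G\stackrel{q}{\to}G/H\to 0$ (this is a genuine twisted sum: $H$ is closed, so $G/H$ is Hausdorff, and the inclusion and the quotient map are open onto their images), and form the push-out $PO$ of $\imath$ and $\chi$. By the push-out result quoted above there is a continuous homomorphism $s\colon G\to PO$ and a twisted sum $0\to\T\stackrel{r}{\to}PO\stackrel{p}{\to}G/H\to 0$ with $s\circ\imath=r\circ\chi$. Since $G/H\in\ST$, this bottom sequence splits, so by Theorem~\ref{theorem_section} there is a continuous $P\colon PO\to\T$ with $P\circ r=\mathrm{id}_{\T}$. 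Then $P\circ s\colon G\to\T$ is a character and $(P\circ s)\circ\imath=P\circ r\circ\chi=\chi$, i.e. $P\circ s$ extends $\chi$. As $\chi$ was arbitrary, $H$ is dually embedded in $G$.

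\emph{Part (2).} Let $0\to\T\stackrel{\imath}{\to}X\stackrel{\pi}{\to}G/H\to 0$ be an arbitrary twisted sum; by Theorem~\ref{theorem_section} it suffices to build a continuous left inverse for $\imath$. Let $q\colon G\to G/H$ be the quotient map and let $PB=\{(x,g)\in X\times G:\pi(x)=q(g)\}$ be the pull-back of $\pi$ and $q$, with coordinate projections $\rho_{1}\colon PB\to X$ and $\rho_{2}\colon PB\to G$. The pull-back analogue of the push-out construction gives that $0\to\T\stackrel{j}{\to}PB\stackrel{\rho_{2}}{\to}G\to 0$, with $j(t)=(\imath(t),0)$, is a twisted sum; one also checks directly (on basic open subsets of $PB$, using that $q$ is open and $\pi$ continuous) that $\rho_{1}$ is a continuous, open, surjective homomorphism, hence a quotient map, with $\ker\rho_{1}=\{(0,h):h\in H\}$, topologically a copy of $H$. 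Since $G\in\ST$, the sequence $0\to\T\to PB\to G\to 0$ splits, so by Theorem~\ref{theorem_section} there is $\Phi\in PB^{\wedge}$ with $\Phi\circ j=\mathrm{id}_{\T}$, that is $\Phi(\imath(t),0)=t$ for all $t\in\T$. Now the hypothesis on $H$ enters: the map $\psi\colon h\mapsto\Phi(0,h)$ is a character of $H$, and since $H$ is dually embedded in $G$ it extends to some $\widetilde\psi\in G^{\wedge}$. Put $\Phi'=\Phi\cdot(\widetilde\psi\circ\rho_{2})^{-1}\in PB^{\wedge}$. It still satisfies $\Phi'(\imath(t),0)=t$, because $\rho_{2}(\imath(t),0)=0$, but it is now trivial on $\ker\rho_{1}$, since $\Phi'(0,h)=\Phi(0,h)\,\widetilde\psi(h)^{-1}=\psi(h)\psi(h)^{-1}=1$. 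As $\rho_{1}$ is a quotient map, $\Phi'$ factors through it, $\Phi'=\overline\Phi\circ\rho_{1}$ with $\overline\Phi\in X^{\wedge}$, and then $\overline\Phi(\imath(t))=\overline\Phi(\rho_{1}(\imath(t),0))=\Phi'(\imath(t),0)=t$. Thus $\overline\Phi$ is a continuous left inverse for $\imath$, so the twisted sum splits, and $G/H\in\ST$.

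Most of the effort is bookkeeping with the push-out/pull-back machinery of Castillo: exactness and openness for the pull-back sequences, and the automatic continuity of $\overline\Phi$ via the quotient-map property of $\rho_{1}$. The one genuinely new point — and the step I expect to require the most care — is the correction $\Phi\mapsto\Phi'$: one must arrange that $\overline\Phi$ restricts on $\imath(\T)$ to the \emph{identity} character of $\T$, not merely to some nontrivial character, and this is exactly where dual-embeddedness of $H$ is used. (By the analysis in Theorem~\ref{mathsmath} this is precisely what ``$X^{\wedge}$ separates points of $\imath(\T)$'' amounts to, since the only subgroup of $\T^{\wedge}\cong\Z$ separating the points of $\T$ is the whole group.)
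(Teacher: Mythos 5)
Your proof is correct and follows essentially the same route as the paper: part (1) via the push-out of the natural sequence along $\chi$, and part (2) via the pull-back of $\pi$ and $q$, using the splitting over $G$ to get a character of $PB$ and then correcting it by an extension of its restriction to $H$ so that it factors through $X$. The only cosmetic difference is that you work directly with a left inverse for $j$ rather than first extending an arbitrary character of $\imath(\T)$ and invoking Theorem \ref{mathsmath}, which changes nothing of substance.
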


\begin{proof}

(1) Suppose that $G/H\in\ST.$
Let $\chi :H \to \T$ be a character, and consider the natural twisted sum  $0\to H\stackrel{\imath}{\to} G \stackrel{\pi}{\to}G/H\to 0$. Taking the corresponding push out, we obtain the following commutative diagram
\[
   \xymatrix{
   0\ar[r]  & H\ar[d]^{\chi}\ar[r]^{\imath} & G\ar[r]^{\pi}\ar[d]^{s} & G/H\ar@{=}[d]   \ar[r]  & 0\\
   0\ar[r]  &  \mathbb{T}\ar[r]_{r}          &PO\ar[r]\ar@{-->}@/^{5mm}/[l]^{T}                                  &G/H \ar[r]    &0
   }
  \]
 where both rows are exact. Since $G/H\in \ST$,  the bottom sequence splits, hence by  Theorem \ref{theorem_section} there exists $T:PO \to \T$ such that $T\circ r=\text{id}_{\T}$. The homomorphism $T\circ s$ is an extension of $\chi$.

(2) Suppose that $H$ is dually embedded in $G$.
Fix a twisted sum $0\to \T\stackrel{\imath}{\to} X \stackrel{\pi}{\to}G/H\to 0$. Let $q:G \to G/H$ be the canonical projection and let $PB=\{(g,x)\in G\times X : q(g)=\pi (x)\}$ be the pull-back of $q$ and $\pi$. Define $r:PB \to G$ and $s: PB \to X$, as usually, as the restrictions of the corresponding projections. Note that
\begin{align*}
\text{ker} \,r &=\{(g,x)\in PB: r(g,x)=0\}=\{(g,x)\in PB: g=0\}\\
                    &=\{(0,x)\in G\times X : 0=\pi(x)\}\\
                     &=\{0\}\times \imath(\T),\\
\text{ker}\, s & =\{(g,x)\in PB: s(g,x)=0\}=\{(g,x)\in PB: x=0\}\\
                      &= \{(g,0)\in G\times X: q(g)=0\}\\
                      &=H\times \{0\}.
\end{align*}
We thus obtain the following diagram:

\[
\xymatrix{
0\ar[dr] &                                 &                                                &                               &0\ar[dl] \\
              & H\times \{0\}\ar@^{^(->}[dr]  &                                                 &\{0\}\times \imath(\T)\ar@_{(->}[dl]      &            \\
                    &                              &PB\ar[ld]_r\ar[dr]^s      &                                &            \\
                    &G\ar[dl]\ar[dr]_q &                                                 &X\ar[dl]^\pi\ar[dr]     &            \\
0                 &                               &G/H                                        &                                 &0
}
\]

Note that since $q$ and $\pi$ are onto, the definition of $PB$ yields $r((U\times X)\cap PB)\supset U$ and $s((G\times V)\cap PB)\supset V$ for every $U\in \mathcal N_0(G)$ and $V\in \mathcal N_0(X);$ hence $r$ and $s$ are onto and open.  Thus both short sequences are topologically exact.

 Fix $\varphi \in \imath(\T)^\wedge$; we will find an extension of $\varphi$ to the whole $X$. We can regard $\varphi $ as a character of  $\{0\}\times \imath(\T)$ by defining $\widetilde{ \varphi} (0,x)=\varphi(x)$. Since $\{0\}\times \imath(\T) \cong \T$ and $G\in \ST$, by Theorem \ref{mathsmath}, $\{0\}\times \imath(\T)$ is dually embedded in $PB$. Thus there exists $\psi\in PB^\wedge$ with $\psi|_{\{0\}\times \imath(\T)}=\widetilde{\varphi}$ i.e. $\psi (0,x)=\varphi(x)$ para todo  $x\in \imath(\T)$. Define, for every $h\in H$, $\widetilde{\psi}(h)=\psi (h,0)$ (note that if $h\in H$ then $(h,0)\in PB$).

Since $H$ is dually embedded in $G$ there exists $\sigma \in G^\wedge$ with $\sigma_{|H} =\widetilde{\psi}$ i.e. $\sigma (h)=\psi( h,0)$ for every $h\in H$. Now define $\rho\in PB^\wedge$ as follows: $\rho (g,x)=\psi (g,x)\overline{\sigma (g)}$. This is clearly continuous. Note that $\text{ker}\, \rho \geq \text{ker}\, s =H\times \{0\}$ since if $h\in H$, we have that $\rho (h,0)= \psi (h,0)\overline{\sigma (h)}=1$. As $X\cong PB / (H\times \{0\})$, the character $\widetilde{\rho}$ in $X^\wedge$ given by $\widetilde{\rho} (x)=\rho (g,x)$ for every $(g,x)\in PB$ is well defined and continuous.

Now $\widetilde{\rho}$ is the desired extension of $\varphi$: if $x\in \imath(\T)$, we have $\widetilde{\rho} (x)=\rho (0,x)=\psi(0,x)\overline{\sigma(0)}=\psi(0,x)=\varphi(x).$
\end{proof}
\begin{remark}
 An analogous result in the framework of $F$-spaces is  Theorem 5.2 in \cite{kaltonsampler} (cf. also Lemma 4.1 in \cite{domanski}).
\end{remark}

The following is a generalization of Theorem 5.1 in \cite{kaltonpeck} and appears as Theorem 4.1 in \cite{castillo}:

\begin{corollary}  A topological Abelian group $G$ belongs to  $\ST$ if and only if whenever $X$ is a topological Abelian group and $H$ is a closed subgroup of $X$ with $X/H \cong G$, then $H$ is dually embedded.
\end{corollary}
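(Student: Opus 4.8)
The plan is to note that this corollary is essentially a repackaging of results already established, so the proof is short; the only points deserving a moment's care are that the statement speaks of \emph{closed} subgroups, and that in the ``only if'' direction the relevant subgroup need not be a copy of $\T$, so Theorem \ref{mathsmath} cannot be quoted directly but must be reached through a push-out.

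For the ``only if'' direction I would argue as follows. Assume $G\in\ST$ and let $X$ be a topological Abelian group with a closed subgroup $H$ such that $X/H\cong G$. Since belonging to $\ST$ is invariant under topological isomorphism, $X/H\in\ST$, and now the assertion that $H$ is dually embedded in $X$ is exactly Theorem \ref{quotientst}(1) applied to the pair $(X,H)$. To recall the mechanism: given $\chi\colon H\to\T$, form the push-out $PO$ of the inclusion $\imath\colon H\to X$ and $\chi$; this produces a commutative diagram whose bottom row is a twisted sum $0\to\T\to PO\to G\to 0$, which splits because $G\in\ST$, and composing a left inverse $T\colon PO\to\T$ of the inclusion $\T\to PO$ with the push-out map $s\colon X\to PO$ yields a character $T\circ s$ of $X$ restricting to $\chi$ on $H$.

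For the ``if'' direction I would start from an arbitrary twisted sum $0\to\T\stackrel{\imath}{\to}X\stackrel{\pi}{\to}G\to 0$. Here $\imath(\T)$ is a closed subgroup of $X$ (it is a topological copy of the compact group $\T$), and since $\pi$ is continuous, surjective and open onto its image it induces a topological isomorphism $X/\imath(\T)\cong G$. Hence the hypothesis applies and $\imath(\T)$ is dually embedded in $X$; Theorem \ref{mathsmath} then gives that the twisted sum splits. As the twisted sum was arbitrary, $G\in\ST$, which closes the equivalence.

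I do not expect a genuine obstacle here. The step that needs the most attention is the ``only if'' direction, precisely because $H$ is a general closed subgroup and not literally $\imath(\T)$, so one cannot invoke Theorem \ref{mathsmath} and must instead use the push-out along the character $\chi$ to transfer the problem to the circle (this is the content we import from Theorem \ref{quotientst}(1)). In the ``if'' direction one should merely make sure that $\imath(\T)$ is closed and that $X/\imath(\T)\to G$ is a topological isomorphism rather than just a continuous bijection; both are immediate from the definition of a twisted sum together with compactness of $\T$.
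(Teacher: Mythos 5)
Your proposal is correct and follows essentially the same route as the paper, whose proof is the one-line observation that the statement follows from Theorem \ref{quotientst}(1) (for the ``only if'' direction) and Theorem \ref{mathsmath} (for the ``if'' direction). The extra care you take about $\imath(\T)$ being closed and $X/\imath(\T)\cong G$ being a topological isomorphism is sound and merely makes explicit what the paper leaves implicit.
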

\begin{proof} This follows from Theorem \ref{quotientst}(1) and Theorem \ref{mathsmath}.
\end{proof}

Let $(G_\alpha)_{\alpha\in I}$ be a family of topological  Abelian groups. The coproduct of $(G_\alpha)_{\alpha\in I}$ is the direct sum $\bigoplus_{\alpha\in I} G_\alpha$
endowed with the finest group topology making the inclusion maps $i_\beta : G_\beta \hookrightarrow \bigoplus_{\alpha\in I}G_\alpha$ continuous, for every $\beta\in I.$
If  $(G_n)^\infty_{n=1}$ is a countable family of groups, this topology
 coincides with the box topology on $\bigoplus_{n\in \mathbb{N}} G_n$.

Recall that the coproduct $\bigoplus_{\alpha\in I} G_\alpha$ has the following universal property:
Given an arbitrary topological Abelian group $G$ and a homomorphism $f:\bigoplus_{\alpha\in I} G_\alpha  \to G$, $f$ is continuous if and only if $f\circ i_\beta$ is continuous $\forall \beta \in I$.

\begin{lemma}.
Let $G_\alpha\;(\alpha\in I)$ be topological Abelian groups and consider their coproduct $\bigoplus_{\alpha\in I}G_\alpha$. For every family of continous homomorphisms $f_\alpha:G_\alpha  \to X $ $(\alpha\in I)$, the map given by
\[
\begin{array}{cccc}
 f: &\displaystyle\bigoplus_{\alpha\in I} G_\alpha & \to &X\\
       &(g_\alpha)_{\alpha\in I}         &\longmapsto     &\sum_{\alpha\in I} f_\alpha (g_\alpha)
\end{array}
\]
is a continuous homomorphism.
\end{lemma}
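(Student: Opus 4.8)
The plan is to reduce everything to the universal property of the coproduct stated just above, after checking that $f$ is a well-defined homomorphism. First I would observe that $f$ makes sense: an element $(g_\alpha)_{\alpha\in I}$ of $\bigoplus_{\alpha\in I}G_\alpha$ has finite support, so $g_\alpha=0$ for all but finitely many $\alpha$, and since each $f_\alpha$ is a homomorphism we have $f_\alpha(g_\alpha)=0$ for those indices; hence the sum $\sum_{\alpha\in I}f_\alpha(g_\alpha)$ has only finitely many nonzero summands and defines an element of $X$ unambiguously (the order of summation being irrelevant because $X$ is Abelian).

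Next I would verify that $f$ is a group homomorphism. Given $(g_\alpha)_{\alpha\in I}$ and $(h_\alpha)_{\alpha\in I}$ in the direct sum, addition is computed coordinatewise, and only finitely many coordinates of either tuple are nonzero; using that each $f_\alpha$ is a homomorphism and that finite sums in the Abelian group $X$ may be regrouped freely, one gets $f\big((g_\alpha)+(h_\alpha)\big)=\sum_\alpha f_\alpha(g_\alpha+h_\alpha)=\sum_\alpha f_\alpha(g_\alpha)+\sum_\alpha f_\alpha(h_\alpha)=f\big((g_\alpha)\big)+f\big((h_\alpha)\big)$.

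The key step is the identity $f\circ i_\beta=f_\beta$ for every $\beta\in I$: indeed, $i_\beta(g)$ is the tuple whose $\beta$-th coordinate is $g$ and whose other coordinates vanish, so $f(i_\beta(g))=f_\beta(g)$. Since each $f_\beta$ is continuous by hypothesis, the composite $f\circ i_\beta$ is continuous for every $\beta\in I$. By the universal property of the coproduct recalled above, this forces $f$ itself to be continuous, which completes the proof.

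I do not expect any genuine obstacle here; the statement is exactly the ``existence'' half of the universal property of $\bigoplus_{\alpha\in I}G_\alpha$, and the only points requiring care are the finiteness of the support (so that the defining sum is legitimate) and the fact that $X$ is Abelian (so that regrouping and reordering finite sums is harmless).
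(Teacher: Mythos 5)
Your proof is correct and follows exactly the route the paper intends: the lemma is stated there without proof, immediately after the universal property of the coproduct is recalled, and your argument (well-definedness via finite support, the homomorphism check, and continuity from $f\circ i_\beta=f_\beta$ together with that universal property) is precisely the omitted verification.
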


\begin{proposition}\label{coproduct}
 Let $(G_\alpha)_{\alpha \in I}$ be a family of topological Abelian groups in $\ST$.
 The coproduct $\bigoplus_{\alpha\in I}G_\alpha$  belongs to $\ST.$
 \end{proposition}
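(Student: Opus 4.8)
The plan is to construct a continuous right inverse for $\pi$ and then invoke Theorem~\ref{theorem_section}. So let $0\to \T\stackrel{\imath}{\to} X \stackrel{\pi}{\to}\bigoplus_{\alpha\in I}G_\alpha\to 0$ be a twisted sum, and for each $\beta\in I$ let $i_\beta:G_\beta\hookrightarrow \bigoplus_{\alpha\in I}G_\alpha$ denote the canonical inclusion. Set $X_\beta:=\pi^{-1}(i_\beta(G_\beta))$, endowed with the subspace topology inherited from $X$. The idea is that each $G_\beta$ only ``sees'' the restricted extension $X_\beta$, which it splits because $G_\beta\in\ST$, and then the universal property of the coproduct (in the form of the preceding Lemma) glues these partial sections into a global one.

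First I would check that the restricted sequences are genuine twisted sums. Since $\pi\circ\imath$ is the zero map we have $\imath(\T)\subseteq X_\beta$, and clearly $\ker(\pi|_{X_\beta})=\ker\pi\cap X_\beta=\imath(\T)$. The restriction $\pi_\beta:=\pi|_{X_\beta}:X_\beta\to i_\beta(G_\beta)$ is continuous and onto; it is also open, because restricting an open continuous surjective homomorphism to the preimage of a subgroup yields an open map (if $W$ is open in $X$ then $\pi_\beta(W\cap X_\beta)=\pi(W)\cap i_\beta(G_\beta)$, which is open in $i_\beta(G_\beta)$). Moreover $i_\beta$ is a topological embedding: the canonical projection $\bigoplus_{\alpha\in I}G_\alpha\to G_\beta$ is continuous by the universal property of the coproduct and is a left inverse of $i_\beta$. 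Hence, identifying $G_\beta$ with $i_\beta(G_\beta)$, we obtain a twisted sum $0\to \T\to X_\beta\to G_\beta\to 0$.

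Since $G_\beta\in\ST$, this twisted sum splits, so by Theorem~\ref{theorem_section} there is a continuous homomorphism $S_\beta:G_\beta\to X_\beta\subseteq X$ with $\pi\circ S_\beta=i_\beta$. Now I would apply the preceding Lemma to the family $(S_\beta)_{\beta\in I}$ of continuous homomorphisms into $X$: the map
\[
S:\bigoplus_{\alpha\in I}G_\alpha\to X,\qquad S\big((g_\alpha)_{\alpha\in I}\big)=\sum_{\alpha\in I}S_\alpha(g_\alpha)
\]
(a finite sum) is a continuous homomorphism, and $\pi\circ S\big((g_\alpha)_{\alpha}\big)=\sum_{\alpha}\pi(S_\alpha(g_\alpha))=\sum_{\alpha}i_\alpha(g_\alpha)=(g_\alpha)_{\alpha}$, i.e.\ $\pi\circ S=\mathrm{id}$. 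By Theorem~\ref{theorem_section} again the original twisted sum splits, and therefore $\bigoplus_{\alpha\in I}G_\alpha\in\ST$.

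The only non-formal points are the verification that $\pi_\beta$ is open (so that the restricted sequence is really topologically exact) and that $i_\beta$ is a topological embedding onto its image, both of which are routine. I expect the genuine content of the argument to be the assembly of the local sections $S_\beta$ into a global section through the coproduct's universal property, and I do not anticipate a serious obstacle there.
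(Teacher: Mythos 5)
Your proof is correct and follows essentially the same route as the paper: the paper forms, for each $\beta$, the pull-back of $\pi$ along $i_\beta$, which (since $i_\beta$ is a topological embedding) is canonically isomorphic to your $X_\beta=\pi^{-1}(i_\beta(G_\beta))$ with the subspace topology, and then assembles the sections $S_\beta$ into a global right inverse via the coproduct lemma exactly as you do. Your hands-on verification that the restricted sequence is topologically exact is just the explicit form of what the pull-back construction delivers automatically.
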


 \begin{proof}
  Let
 $0\to \mathbb T\stackrel{\imath}{\to} X \stackrel{\pi}{\to}\bigoplus_{\alpha \in I} G_\alpha \to 0$
 be a twisted sum. Consider for each $\alpha \in I$, the pull-back $PB_{\alpha}$ of $\pi$ and $i_\alpha:G_{\alpha}\to \bigoplus_{\alpha \in I} G_\alpha$; for every $\alpha \in I$ there is a commutative diagram
  \[
   \xymatrix{
   0\ar[r]  & \mathbb{T}\ar@{=}[d]\ar[r] & PB_\alpha\ar[r]^{\pi_\alpha}\ar[d]^{v_\alpha} & G_\alpha \ar[d]^{i_\alpha}  \ar[r]  & 0\\
   0\ar[r]  &  \mathbb{T}\ar[r]          &X\ar[r]_{\pi}                                  &\displaystyle\bigoplus_{\alpha\in I}G_\alpha \ar[r]\ar[r]\ar@{-->}@/^{4mm}/[l]^{S}                                     &0
   }
  \]
with exact rows. As $G_\alpha\in \ST$, by Theorem \ref{theorem_section} there exist an homomorphism $S_\alpha:G_\alpha  \to PB_\alpha$ such that $\pi_\alpha\circ S_\alpha = \text{id}_{G_{\alpha}}$. Consider the map

\[
\begin{array}{cccc}
 S:   &\bigoplus_{\alpha\in I}G_\alpha  \to &X\\
      &(g_\alpha)_{\alpha\in I}        \longmapsto     &\displaystyle{\sum_{\alpha\in I}} v_\alpha \circ S_\alpha (g_\alpha)
\end{array}
\]
As $v_\alpha \circ S_\alpha $ is a continuous homomorphism, by the above Lemma, $S$ is a continuous homomorphism. For every $g=(g_\alpha)_{\alpha \in I} \in \bigoplus_{\alpha \in I} G_\alpha$ we have that
\[
\pi(S(g))  =\pi \Big(\displaystyle{\sum_{\alpha\in I}} v_\alpha \circ S_\alpha \left(g_\alpha\right)\Big)
                           =\displaystyle \sum_{\alpha\in I} (\pi \circ v_{\alpha} \circ S_{\alpha}) \left(g_{\alpha}\right)
                           =\displaystyle \sum_{\alpha\in I} i_{\alpha}\left(g_{\alpha}\right)=g
\]
so $S$ is a right inverse for $\pi$ and again by Theorem \ref{theorem_section}, the initial twisted sum splits.
\end{proof}

The class of nuclear groups was formally introduced by Banaszczyk in \cite{Ban}. His aim was to  find  a class of topological groups  embracing both nuclear spaces and locally compact Abelian groups (as natural generalizations of finite-dimensional vector spaces). The original definition is  rather technical, as could be expected from its strength of gathering objects from such different classes into the same framework. Next we collect some facts concerning the class of nuclear groups which are relevant to this paper:

\begin{itemize}
  \item Nuclear groups are locally quasi-convex $\cite[8.5]{Ban}$.
  \item Subgroups of nuclear groups are dually embedded $\cite[8.3]{Ban}$
  \item Products, countable coproducts,  subgroups and  Hausdorff quotients  of nuclear groups are nuclear $\cite[7.6, 7.8, 7.5]{Ban}$.
  \item Every locally compact Abelian group is nuclear $\cite[7.10]{Ban}$.
  \item A nuclear locally convex space is a nuclear group $\cite[7.4]{Ban}$. Furthermore, if a topological vector space is a nuclear group, then it is a locally convex nuclear space $\cite[8.9]{Ban}$.
\end{itemize}

 \begin{theorem}\label{limits}
Let $\{G_n, f^m_n\}$ be  a countable direct system  of nuclear Abelian groups in $\ST$.  Then the direct limit $\displaystyle \lim_{\rightarrow} G_n
$ belongs to $\ST$. In particular, sequential direct limits of locally compact groups belong to $\ST$.
\end{theorem}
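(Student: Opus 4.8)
The plan is to realize the direct limit as a quotient of the coproduct of the $G_n$ and to transport a splitting along that quotient, invoking nuclearity only once. Write $C:=\bigoplus_n G_n$ for the coproduct, $i_n:G_n\hookrightarrow C$ for the canonical inclusions, $f_n:G_n\to G:=\varinjlim G_n$ for the canonical maps into the direct limit, and let $q:C\to G$ be the canonical surjection, so that $q\circ i_n=f_n$ and $G$ is the topological quotient $C/N$ with $N:=\ker q$. Since $C$ is a countable coproduct of nuclear groups, $C$ is itself nuclear; hence \emph{every} subgroup of $C$ is dually embedded in $C$, and in particular so is $N$. This is the only point at which nuclearity enters the argument.

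Now fix a twisted sum $0\to\T\stackrel{\imath}{\to}X\stackrel{\pi}{\to}G\to 0$. For each $n$ form the pull-back $PB_n$ of $\pi$ and $f_n$; as in the proof of Proposition~\ref{coproduct}, this produces a twisted sum $0\to\T\to PB_n\stackrel{\pi_n}{\to}G_n\to 0$ together with a continuous homomorphism $v_n:PB_n\to X$ with $\pi\circ v_n=f_n\circ\pi_n$. As $G_n\in\ST$, Theorem~\ref{theorem_section} yields a continuous homomorphism $S_n:G_n\to PB_n$ with $\pi_n\circ S_n=\text{id}_{G_n}$; set $\sigma_n:=v_n\circ S_n:G_n\to X$, so that $\pi\circ\sigma_n=f_n$ for every $n$. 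By the Lemma preceding Proposition~\ref{coproduct}, the family $(\sigma_n)_n$ assembles into a continuous homomorphism $\sigma:C\to X$, $\sigma\bigl((g_n)_n\bigr)=\sum_n\sigma_n(g_n)$; and $\pi\circ\sigma=q$, because $\pi\circ\sigma_n=f_n=q\circ i_n$ for each $n$.

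In general $\sigma$ will not vanish on $N$, which is exactly why it need not descend to $G$; however $\pi(\sigma(N))=q(N)=\{0\}$, so $\sigma(N)\subseteq\imath(\T)$ and $\psi:=\imath^{-1}\circ(\sigma\restr{N})$ is a continuous character of $N$. Since $N$ is dually embedded in $C$, $\psi$ extends to some $\chi\in C^{\wedge}$, and then $S':=\sigma-\imath\circ\chi:C\to X$ is a continuous homomorphism with $S'\restr{N}=0$ and $\pi\circ S'=q$ (as $\pi\circ\imath$ is trivial). Hence $S'$ factors through $q$ as a continuous homomorphism $S:G\to X$ with $\pi\circ S=\text{id}_G$ --- continuity of $S$ being automatic because $G$ carries the quotient topology --- and Theorem~\ref{theorem_section} gives that the twisted sum splits. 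Thus $G\in\ST$. For the final assertion, every locally compact Hausdorff Abelian group is nuclear and, by Corollary~\ref{theo_loc_compact}, belongs to $\ST$, so the first part applies to any countable direct system of such groups.

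The main obstacle is that the obvious strategy --- choosing the $S_n$ coherently so that $\sigma_m\circ f^m_n=\sigma_n$ and the $\sigma_n$ glue to a section defined on $G$ --- fails: the discrepancy $\sigma_m\circ f^m_n-\sigma_n$ takes values in $\imath(\T)$, i.e.\ it is a genuine $\T$-valued cocycle obstruction with no reason to be trivializable one level at a time. The way around it is to not glue over $G$ at all, but to lift the quotient map $q:C\to G$ and then correct the lift by a single character of $C$; that character exists precisely because the kernel $N$ of $q$ is dually embedded in the nuclear coproduct $C$. Note that nothing in this argument needs $N$ to be closed or $G$ to be Hausdorff; when $N$ happens to be closed one may instead deduce $G=C/N\in\ST$ at once from Proposition~\ref{coproduct} and Theorem~\ref{quotientst}(2).
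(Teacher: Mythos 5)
Your proof is correct and follows essentially the same route as the paper: realize $\varinjlim G_n$ as a quotient of the coproduct $C=\bigoplus_n G_n$, use nuclearity of $C$ to get the kernel dually embedded, and combine this with the splitting over each $G_n$. The paper simply cites Proposition~\ref{coproduct} and Theorem~\ref{quotientst}(2) (taking the quotient by the \emph{closure} $\overline{H}$ of the relation subgroup so that the latter applies), whereas you inline both proofs into one construction --- which, as you note, has the mild advantage of not requiring the kernel $N$ to be closed.
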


\begin{proof}
The direct sum $\bigoplus^\infty_{n=1} G_n$ with the coproduct topology is in $\ST$ by \ref{coproduct}. Let $i_m  :G_m  \to \bigoplus G_n$ be the inclusion map, for every $m\in \mathbb N.$ It is known (see \cite{ATCh}) that $ {\displaystyle \lim_{\rightarrow}}\, G_n \cong (\bigoplus G_n )/ \overline{H}$, where  $\overline H$ is the closure of the subgroup $H$ generated by $\{ i_m \circ f^m_n (g_n) - i_n (g_n) : n \leq m; \; g_n \in G_n\}$ . Since countable coproducts of nuclear groups are nuclear groups,    $\overline{H}$ is dually embedded. By Theorem \ref{quotientst},   $\displaystyle \lim_{\rightarrow} G_n \in \ST$.
\end{proof}

Varopoulos introduced in \cite{Var} the class $\mathcal{L}_\infty$  of all topological groups whose topologies
are the intersection of a decreasing sequence of locally compact Hausdorff group
topologies.
He succeeded  in his aim  of   extending   known  results about  locally compact groups and established   the basis for the development of the harmonic analysis on $\mathcal{L}_\infty$
groups.  Subsequently many other authors
investigated different properties of this class (\cite{GaHer}, \cite{reade}, \cite{salvador}, \cite{sulley}, \cite{venkat}).

The following is a relevant fact concerning the structure of $\mathcal{L}_\infty$ groups proved by Sulley:

\begin{proposition}(\cite{sulley})
Let $G$ be any Abelian group endowed with an $\mathcal{L}_\infty$ topology.
Then it  has an
open subgroup which is a strict inductive limit of a sequence of Hausdorff locally
compact Abelian groups.
\end{proposition}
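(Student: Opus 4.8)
The plan is to combine the structure theory of locally compact Abelian (LCA) groups with the open mapping theorem. Write the given topology as $\tau=\bigcap_{n}\tau_n$, where $\tau_1\supseteq\tau_2\supseteq\cdots$ are locally compact Hausdorff group topologies on $G$. Two elementary remarks will be used throughout: the identity map $(G,\tau_m)\to(G,\tau_k)$ is continuous for $m\le k$, so $\tau_m$-compact sets are $\tau_k$-compact; and $\tau=\bigcap_{n\in S}\tau_n$ for every infinite $S\subseteq\N$, so we may freely pass to a subsequence of $(\tau_n)$. The goal is to construct an increasing chain $W_1\subseteq W_2\subseteq\cdots$ of subgroups of $G$ such that, for each $n$, $W_n$ is open and $\sigma$-compact in $(G,\tau_n)$ and closed in $(G,\tau_k)$ for every $k>n$. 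The open subgroup we are after will then be $H:=\bigcup_n W_n$.

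Suppose such a chain has been produced. First, $H$ is $\tau$-open: it contains the $\tau_n$-open set $W_n$, hence is $\tau_n$-open, for every $n$. Next, fix $n$. Since $W_n$ is $\tau_n$-closed (being $\tau_n$-open) it is $\tau_k$-closed for all $k\le n$, and by construction for all $k>n$; thus $W_n$ is $\tau_k$-closed for every $k$, i.e. $\tau$-closed. Moreover, for $k\ge n$ the group $(W_n,\tau_k)$ is locally compact (for $k=n$ as a $\tau_n$-open subgroup of $(G,\tau_n)$, for $k>n$ as a $\tau_k$-closed subgroup of $(G,\tau_k)$), and the identity $(W_n,\tau_n)\to(W_n,\tau_k)$ is a continuous surjection from a $\sigma$-compact locally compact group onto a locally compact group, hence open by the open mapping theorem; so $\tau_n$ and $\tau_k$ induce the same topology on $W_n$. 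Combining this with the closedness of $W_n$ in each $\tau_k$ (a short argument: a $\tau_n|_{W_n}$-open $V$ equals $W_n\cap U$ with $U=V\cup(G\setminus W_n)$, and $U$ is $\tau_k$-open for every $k$), one gets $\tau|_{W_n}=\tau_n|_{W_n}$, a $\sigma$-compact LCA topology, and $W_n$ closed in $(W_{n+1},\tau)$. Finally, $\tau|_H$ is exactly the inductive limit topology of $\big((W_n,\tau)\big)_n$: if $U\subseteq H$ meets each $W_n$ in a $\tau|_{W_n}$-open (hence $\tau_n$-open in $G$) set, then for every $m$ we have $U=\bigcup_{n\ge m}(U\cap W_n)$, a union of $\tau_n$-open ($n\ge m$), hence $\tau_m$-open, subsets of $G$, so $U$ is $\tau_m$-open for all $m$, i.e. $\tau$-open. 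Hence $(H,\tau)$ is the strict inductive limit of the sequence of LCA groups $\big((W_n,\tau)\big)_n$, and the proposition follows modulo the construction of the $W_n$.

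The construction of the $W_n$ is the technical heart, and is where I expect the main obstacle. Propagating $\sigma$-compactness and $\tau_n$-openness is routine: given $W_{n-1}$, $\sigma$-compact and open in $\tau_{n-1}$, it stays $\sigma$-compact in $\tau_n$, so by the structure theorem it can be engulfed in a countable increasing union of compactly generated $\tau_n$-open subgroups of $(G,\tau_n)$, each of the form $\R^{a}\times\Z^{b}\times K$ with $K$ compact; this yields a $\tau_n$-open $\sigma$-compact subgroup $V_n\supseteq W_{n-1}$. The difficulty is to enlarge $V_n$ to a $W_n$ that remains $\sigma$-compact and $\tau_n$-open while becoming $\tau_k$-closed for every $k>n$. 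One cannot simply pass to closures, since the $\tau_k$-closure of a $\sigma$-compact subgroup need not be $\sigma$-compact; instead one must control the closures $\overline{V_n}^{\tau_k}$ simultaneously for all $k>n$, exploiting the rigidity of the compactly generated pieces $\R^{a}\times\Z^{b}\times K$ together with a diagonal argument, possibly after replacing $(\tau_n)$ by a subsequence. Carrying this out is precisely the content of Sulley's theorem, and it is the step I would expect to demand the most care.
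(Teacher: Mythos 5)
The paper does not actually prove this proposition; it is quoted verbatim from Sulley's article and used as a black box, so there is no internal argument to measure yours against. Judged on its own terms, your write-up contains a genuine gap, and you say so yourself in the final sentence: everything is reduced to producing an increasing chain $W_1\subseteq W_2\subseteq\cdots$ of subgroups with $W_n$ open and $\sigma$-compact in $\tau_n$ and closed in $\tau_k$ for all $k>n$, and that construction is never carried out. The reduction itself is sound and nicely organized --- the open mapping theorem argument showing $\tau_n|_{W_n}=\tau_k|_{W_n}$ for $k\ge n$, the verification that $W_n$ is $\tau$-closed and that $\tau|_{W_n}=\tau_n|_{W_n}$, and the identification of $\tau|_H$ with the inductive limit topology are all correct. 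But the existence of the chain is precisely the hard combinatorial core of Sulley's theorem, and the paragraph you devote to it is a description of the difficulty rather than a resolution of it: ``engulfing'' $W_{n-1}$ in a $\sigma$-compact $\tau_n$-open subgroup $V_n$ is indeed routine, but you give no mechanism for enlarging $V_n$ to something simultaneously $\tau_k$-closed for \emph{all} $k>n$ while staying $\sigma$-compact and $\tau_n$-open. Passing to closures can destroy $\sigma$-compactness, as you note, and the appeal to ``rigidity of $\R^a\times\Z^b\times K$ together with a diagonal argument, possibly after passing to a subsequence'' is not an argument one could execute from what is written.

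Concretely, the missing idea is how the compatibility between the different locally compact topologies $\tau_n$ on the \emph{same} underlying group forces the compactly generated pieces to stabilize. In Sulley's proof this is where one exploits that a continuous bijective homomorphism between the relevant $\sigma$-compact locally compact pieces is a topological isomorphism, so that the subgroups built at stage $n$ can be chosen once and for all to be closed (indeed open, after the topologies are shown to agree on them) in every later $\tau_k$; without some version of that stabilization argument the induction cannot close. As it stands, your proposal is a correct reduction of the proposition to its essential content, not a proof of it.
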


\begin{corollary}
Let $G$ be any Abelian group endowed with an $\mathcal{L}_\infty$ topology. Then $G$ belongs to $\ST$.
\end{corollary}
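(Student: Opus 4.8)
The plan is to reduce the statement to two results already in hand: that sequential direct limits of locally compact groups belong to $\ST$ (Theorem \ref{limits}), and that membership in $\ST$ passes from an open subgroup to the whole group (Corollary \ref{opensbg}). First I would record that an $\mathcal{L}_\infty$ topology, being by definition the intersection of a sequence of Hausdorff group topologies, is itself Hausdorff, so that the results above apply. By the Proposition of Sulley quoted just above, $G$ then has an open subgroup $H$ which is the strict inductive limit of an increasing sequence $(H_n)_{n\in\N}$ of Hausdorff locally compact Abelian groups. Each $H_n$ is a locally compact Hausdorff Abelian group, hence it lies in $\ST$ by Corollary \ref{theo_loc_compact} and it is a nuclear group since every locally compact Abelian group is nuclear.

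Next I would observe that such a strict inductive limit is a particular instance of a countable direct system of topological Abelian groups: the index set is $\N$, and the bonding maps $f^m_n\colon H_n\hookrightarrow H_m$ $(n\le m)$ are the inclusions, which are continuous homomorphisms, while the inductive limit topology on $H$ coincides with the colimit topology $\lim_{\rightarrow}H_n$. Hence $H\cong\lim_{\rightarrow}H_n$ with $\{H_n,f^m_n\}$ a countable direct system of nuclear Abelian groups in $\ST$, and Theorem \ref{limits} yields $H\in\ST$ (equivalently, one may quote directly the last sentence of Theorem \ref{limits}, as $H$ is a sequential direct limit of locally compact groups). Since $H$ is an open subgroup of $G$ and $H\in\ST$, Corollary \ref{opensbg} finally gives $G\in\ST$, which is the assertion.

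The only point that needs a moment's attention is the identification in the second paragraph, namely that Sulley's ``strict inductive limit of a sequence of locally compact groups'' is, as a topological group, exactly the object $\lim_{\rightarrow}H_n$ appearing in Theorem \ref{limits}: one must check that the $H_n$ genuinely form a direct system of topological Abelian groups and that the strict-inductive-limit topology agrees with the categorical colimit topology used there. This is immediate from the definitions of both notions, so I do not expect a real obstacle; beyond it, the proof is a direct concatenation of the cited results.
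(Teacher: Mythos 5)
Your proof is correct and follows exactly the paper's own route: Sulley's proposition gives an open subgroup which is a sequential strict inductive limit of locally compact Abelian groups, Theorem \ref{limits} places that subgroup in $\ST$, and Corollary \ref{opensbg} transfers membership to $G$. The extra care you take in identifying the strict inductive limit with the direct limit of Theorem \ref{limits} is a reasonable elaboration of a step the paper leaves implicit.
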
	

\begin{proof}
By the above Proposition and Theorem \ref{limits},  $G$ has an open subgroup in $\ST$. Hence Corollary \ref{opensbg}
implies that  $G$ belongs to $\ST$.
\end{proof}

\section{Quasi-homomorphisms}\label{quasiquasi}

In his study of stability of homomorphisms between topological Abelian groups \cite{cabello}, Cabello defined the notion of quasi-homomorphism, which is inspired by the technique of quasi-linear maps introduced by Kalton and others (see \cite{kaltonsampler}).

\begin{definition}\label{aren}\cite{cabello} Let $G$ and $H$ be topological Abelian groups and $\omega:G\to H$ a map with $\omega(0)=0$. We say that $\omega$ is a {\em quasi-homomorphism} if the map $\Delta_{\omega}:(x,y)\in G\times G\mapsto \omega(x+y)-\omega(x)-\omega(y)\in H$ is continuous at $(0,0).$

 A quasi-homomorphism $\omega:G\to H$ is {\em approximable}  if there exists a homomorphism $a: G \to H$ such that $\omega-a$ is continuous at $0$.
\end{definition}

Our aim is to  use the notion of approximable quasi-homomorphisms to obtain new examples of groups in $\ST$.

We start with some facts about quasi-homomorphisms taken from \cite{cabello}:

\begin{proposition}\label{cabello}
Let $G$ and $H$ be topological Abelian groups and $\omega:G\to H$ be a quasi-homomorphism.
\begin{enumerate}
\item The sets \begin{multline*} W(V,U)=\{(h,g)\in H\times G\,:\,g\in U,\;h\in \omega(g)+V\}\\  (U\in{\cal N}_0(G),\;  V\in{\cal N}_0(H)) \end{multline*}
form a basis of neighborhoods of zero for a group topology on $H\times G$ .

 \item If $H\oplus_{\omega}G$ denotes the group $H\times G$ endowed with the topology induced by the quasi-homomorphism $\omega$ and $\imath_H$ and $\pi_G$ the canonical inclusion and projection, respectively,  $ 0\to H\stackrel{\imath_H}{\to} H\oplus_{\omega}G \stackrel{\pi_G}{\to}G\to 0$ is a twisted sum of topological Abelian groups.

 \item A quasi-homomorphism is approximable if and only if the induced twisted sum splits.

 \item The twisted sum $ 0\to H\stackrel{\imath}{\to} X \stackrel{\pi}{\to}G\to 0$ is equivalent to one induced by a quasi-homomorphism if and only if it splits algebraically and there exists a map $\rho: G \to X$ such that $\pi\circ \rho =\text{id}_G$,  $\rho (0)=0$ and $\rho$ is continuous at the origin.

 \end{enumerate}
 \end{proposition}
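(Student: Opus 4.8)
The proof is essentially a matter of unwinding the four constructions; the only step that uses the quasi-homomorphism hypothesis in an essential way is (1), and this is where I would put the work. For (1) the plan is to verify the classical criterion under which a filter base $\mathcal B$ of subsets of an abelian group, each containing $0$, is the base of neighbourhoods of $0$ of a (unique) group topology: it suffices that for every $B\in\mathcal B$ there are $B',B''\in\mathcal B$ with $B'+B'\subseteq B$ and $-B''\subseteq B$. That $\{W(V,U)\}$ is a filter base is clear, since each contains $(0,0)$ (as $\omega(0)=0$) and $W(V_1\cap V_2,\,U_1\cap U_2)$ refines $W(V_1,U_1)\cap W(V_2,U_2)$. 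For the two closure conditions I would use the identities $\omega(g_1)+\omega(g_2)=\omega(g_1+g_2)-\Delta_\omega(g_1,g_2)$ and $\omega(-g)=-\omega(g)-\Delta_\omega(g,-g)$, both immediate from the definition of $\Delta_\omega$: if $(h_i,g_i)\in W(V',U')$ then $h_1+h_2-\omega(g_1+g_2)\in V'+V'-\Delta_\omega(g_1,g_2)$, so one first picks symmetric $V',V_0\in\mathcal N_0(H)$ with $V_0+V'+V'\subseteq V$ and then, using that $\Delta_\omega$ is continuous at $(0,0)$ with value $0$, a symmetric $U'\in\mathcal N_0(G)$ with $U'+U'\subseteq U$ and $\Delta_\omega(U'\times U')\subseteq V_0$; the condition on $-W(V',U')$ is handled the same way from the second identity. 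The one point to watch is that only continuity of $\Delta_\omega$ at the origin is available, so all estimates must be localized there; note also that no translation axiom of the type ``$a\in B\Rightarrow a+B'\subseteq B$'' is required, since the $W(V,U)$ need not be neighbourhoods of their points other than $0$.

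Parts (2) and (3) are then bookkeeping. For (2): with this topology $\pi_G(W(V,U))=U$ because $(\omega(g),g)\in W(V,U)$ whenever $g\in U$, and $\pi_G^{-1}(U)=W(H,U)$, so $\pi_G$ is continuous and open; $\imath_H^{-1}(W(V,U))=V$ and $W(V,U)\cap(H\times\{0\})=V\times\{0\}$, so $\imath_H$ is a topological embedding; and exactness is the algebraic identity $\ker\pi_G=H\times\{0\}=\imath_H(H)$. For (3): if $\omega=a+c$ with $a\colon G\to H$ a homomorphism and $c$ continuous at $0$, then $P(h,g):=h-a(g)$ is a group homomorphism $H\oplus_\omega G\to H$ with $P\circ\imath_H=\mathrm{id}_H$ which is continuous at $0$, since $P(h,g)=(h-\omega(g))+c(g)\in V+c(U)$ on $W(V,U)$; by Theorem \ref{theorem_section} the induced twisted sum splits. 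Conversely, a continuous left inverse $P$ of $\imath_H$ (Theorem \ref{theorem_section}) yields the homomorphism $a(g):=-P(0,g)$ of the underlying groups, and since the section $g\mapsto(\omega(g),g)$ is continuous at $0$ — its preimage of $W(V,U)$ is exactly $U$ — the map $g\mapsto\omega(g)-a(g)=P(\omega(g),g)$ is continuous at $0$, i.e. $\omega$ is approximable.

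For (4), the forward direction takes $\rho$ to be the image, under the given equivalence of twisted sums, of the section $g\mapsto(\omega(g),g)$ of $\pi_G$: this is continuous at $0$, sends $0$ to $0$, and is a set-theoretic right inverse of $\pi$, while the algebraic splitting is the one induced by $g\mapsto(0,g)$. For the converse, an algebraic splitting provides an isomorphism of abstract groups $\theta\colon X\to H\times G$ with $\theta\imath=\imath_H$ and $\pi_G\theta=\pi$; writing $\theta\rho(g)=(\omega(g),g)$ one has $\omega(0)=0$, and one computes $\theta(\rho(g_1+g_2)-\rho(g_1)-\rho(g_2))=(\Delta_\omega(g_1,g_2),0)=\theta(\imath(\Delta_\omega(g_1,g_2)))$, so $\Delta_\omega(g_1,g_2)=\imath^{-1}(\rho(g_1+g_2)-\rho(g_1)-\rho(g_2))$; as $\rho$ is continuous at $0$ this depends continuously on $(g_1,g_2)$ near $(0,0)$ and vanishes there, and as $\imath$ is an embedding $\Delta_\omega$ is continuous at $(0,0)$, so $\omega$ is a quasi-homomorphism. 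It then remains to check that $\theta$ is a topological isomorphism onto $H\oplus_\omega G$; here $\theta^{-1}(W(V,U))=\rho(U)+\imath(V)$, and one containment of neighbourhood filters is immediate from continuity at $0$ of $\rho$ and $\imath$, while the reverse one follows because $x\mapsto x-\rho(\pi(x))$ is continuous at $0$ with image in $\imath(H)$ and can therefore be followed by the continuous inverse of the embedding $\imath$. I expect this final step — verifying that the algebraic splitting is automatically compatible with the $\oplus_\omega$ topology — to be the most bookkeeping-heavy, though each ingredient is routine; the genuinely essential use of the hypotheses is the localized estimate carried out in part (1).
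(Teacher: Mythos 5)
Your proposal is correct, but there is no internal proof to compare it with: the paper states Proposition \ref{cabello} without proof, introducing it as a collection of ``facts about quasi-homomorphisms taken from \cite{cabello}'', so your write-up in effect supplies the argument the paper delegates to Cabello's article. Everything checks out. In (1) the Bourbaki-type criterion for a filter base at $0$ of an abelian group, combined with the identities $\omega(g_1)+\omega(g_2)=\omega(g_1+g_2)-\Delta_\omega(g_1,g_2)$ and $\omega(-g)=-\omega(g)-\Delta_\omega(g,-g)$, correctly reduces the two closure conditions to continuity of $\Delta_\omega$ at the origin, and you are right that no further axiom is needed in the abelian case. The computations $\pi_G(W(V,U))=U$, $\imath_H^{-1}(W(V,U))=V$ and $W(V,U)\cap(H\times\{0\})=V\times\{0\}$ give (2); the correspondence $a\leftrightarrow P$ via $P(h,g)=h-a(g)$, together with continuity at $0$ of the section $g\mapsto(\omega(g),g)$ (this is exactly Lemma \ref{solido} of the paper), gives (3); and in (4) the identity $\theta^{-1}(W(V,U))=\rho(U)+\imath(V)$ plus the continuity at $0$ of $x\mapsto x-\rho(\pi(x))$ (which lands in $\imath(H)$ and can be composed with the inverse of the embedding) is precisely what is needed to check that the algebraic splitting transports the given topology to the $\oplus_\omega$ topology. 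The only stylistic remark is that in (4) you could shorten the last verification by observing that $x\mapsto(\imath^{-1}(x-\rho(\pi(x))),\pi(x))$ is itself the map $\theta$, so its continuity at $0$ and that of its inverse $(h,g)\mapsto\imath(h)+\rho(g)$ follow in one line from the continuity properties already established.
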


\begin{lemma}\label{solido}
Let $G$ and $H$ topological Abelian groups and $\omega:H \to G$, a quasi-homomorphism. Then the map $ g\in G\longmapsto (\omega(g),g) \in H\oplus_\omega G $ is continuous at $0$.
\end{lemma}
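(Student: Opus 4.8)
The plan is to do nothing more than unwind the definition of the topology on $H\oplus_\omega G$ recorded in Proposition \ref{cabello}(1) (here $\omega$ is understood as a quasi-homomorphism $G\to H$, so that $g\mapsto(\omega(g),g)$ lands in $H\times G$). Write $\sigma\colon G\to H\oplus_\omega G$ for the map $g\mapsto(\omega(g),g)$. First I would note that $\sigma$ is well defined and that $\sigma(0)=(\omega(0),0)=(0,0)$, since by hypothesis $\omega(0)=0$; hence it makes sense to speak of continuity of $\sigma$ at $0$, and the target point of interest is the neutral element of $H\oplus_\omega G$.

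Next I would fix an arbitrary basic neighborhood of $0$ in $H\oplus_\omega G$, i.e. a set $W(V,U)=\{(h,g)\in H\times G:g\in U,\ h\in\omega(g)+V\}$ with $U\in\mathcal N_0(G)$ and $V\in\mathcal N_0(H)$, these forming a neighborhood basis of $0$ by Proposition \ref{cabello}(1). The claim is that $U$ itself is the required neighborhood of $0$ in $G$: for every $g\in U$ one has $g\in U$ trivially, and $\omega(g)\in\omega(g)+V$ because $0\in V$; therefore $\sigma(g)=(\omega(g),g)\in W(V,U)$. Since $W(V,U)$ was an arbitrary basic neighborhood, this establishes continuity of $\sigma$ at $0$.

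There is essentially no obstacle here: the topology on $H\oplus_\omega G$ was constructed precisely so that the graph of $\omega$ threads through the basic neighborhoods $W(V,U)$, and the only facts used are $\omega(0)=0$ together with Proposition \ref{cabello}(1). The one point worth flagging — and the reason the statement is restricted to the origin — is that $\sigma$ need not be continuous elsewhere: at a point $g_0\in G$, continuity of $\sigma$ is equivalent to continuity of $\omega$ at $g_0$, which is not assumed. Thus the lemma should be presented exactly as a local statement at $0$, and its role is to supply the section $\rho$ appearing in Proposition \ref{cabello}(4).
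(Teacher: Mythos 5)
Your argument is correct and is exactly the paper's proof: the paper also observes that for every $U\in\mathcal N_0(G)$ and $V\in\mathcal N_0(H)$ one has $g\in U\Rightarrow(\omega(g),g)\in W(V,U)$, which you have merely spelled out in more detail. No differences to report.
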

\begin{proof}
Note that for every $U\in{\cal N}_0(G)$ and $V\in{\cal N}_0(H)$ we have $g\in U\Rightarrow (\omega(g),g)\in W(V,U).$
\end{proof}
The following result is Lemma 11 in \cite{cabello}. We give here a proof for the sake of completeness.
\begin{proposition} \label{snork}
Let $\pi:X\to G$ be a continuous and open surjective homomorphism between Abelian groups. Suppose that $X$ is metrizable. Then there exists  $\rho:G \to X$ such that  $\pi\circ \rho=\text{id}_G$ and $\rho$ is continuous at $0\in G.$
\end{proposition}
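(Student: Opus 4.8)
The plan is to obtain $\rho$ by a selection argument: for each $g\in G$ we choose a $\pi$-preimage that is essentially as close to $0$ as possible, measured with a fixed metric on $X$. Openness of $\pi$ will then force these preimages to shrink to $0$ as $g$ does, which is exactly continuity at the origin.

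First I would fix a metric $d$ on $X$ compatible with its topology and write $B_\varepsilon:=\{x\in X:d(x,0)<\varepsilon\}$; the balls $B_\varepsilon$ ($\varepsilon>0$) form a neighbourhood basis of $0$ in $X$. For $g\in G$ put $\phi(g):=d\bigl(0,\pi^{-1}(g)\bigr)=\inf\{d(x,0):\pi(x)=g\}$, so $\phi(0)=0$. The key point is that $\phi(g)\to 0$ as $g\to 0$: given $\varepsilon>0$, the set $\pi(B_{\varepsilon/2})$ is a neighbourhood of $0$ in $G$ because $\pi$ is open, and if $g\in\pi(B_{\varepsilon/2})$ then some $x\in B_{\varepsilon/2}$ satisfies $\pi(x)=g$, whence $\phi(g)<\varepsilon/2$.

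Next, using the axiom of choice, I would pick for each $g\neq 0$ a point $\rho(g)\in\pi^{-1}(g)$ with $d(\rho(g),0)\le 2\phi(g)$, and set $\rho(0):=0$. Then $\pi\circ\rho={\rm id}_G$ and $\rho(0)=0$ by construction. For continuity at $0$: given $\varepsilon>0$, the neighbourhood $W:=\pi(B_{\varepsilon/2})$ of $0$ in $G$ satisfies $d(\rho(g),0)\le 2\phi(g)<\varepsilon$ for every $g\in W$, i.e.\ $\rho(W)\subseteq B_\varepsilon$; hence $\rho$ is continuous at $0$.

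The point to be careful about is whether such a preimage $\rho(g)$ exists, which requires $\phi(g)>0$ for every $g\neq 0$: indeed $\phi(g)=0$ with $g\neq 0$ would force $d(\rho(g),0)=0$, i.e.\ $\rho(g)=0$, contradicting $\pi(\rho(g))=g\neq 0$. This holds in our situation because $X$, being metrizable, is Hausdorff and $\ker\pi$ is closed, so $G\cong X/\ker\pi$ is Hausdorff; then $\pi^{-1}(g)$ is a closed subset of $X$ omitting $0$, and its distance to $0$ is positive. (Without Hausdorffness of $G$ no section continuous at $0$ can exist at all, since a point of $\overline{\{0\}}\setminus\{0\}$ in $G$ would have to be mapped into $\overline{\{\rho(0)\}}=\{\rho(0)\}$, forcing it to equal $0$.) The same argument can alternatively be run combinatorially: fix a decreasing basis $(V_n)_{n\ge 0}$ of symmetric neighbourhoods of $0$ in $X$ with $V_0=X$, put $W_n:=\pi(V_n)$ and $n(g):=\sup\{n:g\in W_n\}$, and choose $\rho(g)\in V_{n(g)}\cap\pi^{-1}(g)$; then $g\in W_m$ implies $n(g)\ge m$, so $\rho(W_m)\subseteq V_m$.
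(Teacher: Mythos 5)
Your argument is correct and essentially the same as the paper's: the paper fixes a decreasing neighbourhood basis $(U_n)$ of $0$ in $X$ and selects $\rho(g)$ in the deepest $U_n$ whose image under $\pi$ still contains $g$, which is precisely the combinatorial variant you sketch in your last sentence, and your metric version is the same selection idea with depth measured by $\phi(g)=d\bigl(0,\pi^{-1}(g)\bigr)$. Both your proof and the paper's tacitly assume $G$ Hausdorff (the paper's claim $\bigcap_n \pi(U_n)=\{0\}$ needs it exactly as your $\phi(g)>0$ does); you rightly flag that the statement fails otherwise, though note that your assertion that $\ker\pi$ is closed is equivalent to, rather than a proof of, that assumption.
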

\begin{proof} Note that to define $\rho$ with $\pi\circ \rho={\rm id}_G$, we simply must choose for every $g\in G$ an element $x\in \pi^{-1}(g),$ which is a nonempty set since $\pi$ is onto. Let us see that it can be done in such a way that the map thus obtained is continuous at zero.

Let $\{U_n : n\in \mathbb N\}$ be a decreasing basic sequence of neighborhoods of zero in $X$, where $U_1=X$. Due to the continuity of $\pi,$ we have $\bigcap_{n\in \mathbb{N}} \pi(U_n) = \{0\}$. 

Let $\rho$ take the value $0$ on $g=0$. For any $g\not=0$, by the previous paragraph we can  choose $n$ and $x$ with $\pi(x)=g,\;x\in U_{n},\;g\not \in \pi(U_{n+1}),$ and define $\rho(g)=x.$ Now fix $m\in \mathbb N;$ we must find $V\in {\cal N}_0(G)$ with $\rho(V) \subseteq U_m.$ Since $\pi$ is open there exists $V\in {\cal N}_0(G)$ with $\pi(U_m)\supseteq V.$  Fix $g\in V$ and let us show that $\rho(g)\in U_m.$ If $\rho(g)=0$ this is trivial. Otherwise $\rho(g)=x$ with $\pi(x)=g,\;x\in U_n,\;g\not \in \pi(U_{n+1})$ for some $n$. Then $g\in V \subseteq \pi(U_m),$ hence $m\le n$ and $x\in U_n \subseteq U_m.$

\end{proof}

\begin{corollary}\label{pi}

Let $G$ be a metrizable topological Abelian group.
\begin{enumerate}
\item
If $H$ is metrizable and divisible, every twisted sum  $ 0\to H\stackrel{\imath}{\to} X \stackrel{\pi}{\to}G\to 0$ is equivalent to one induced by a quasi-homomorphism.
\item $G$ belongs to $\ST$ if and only if every quasi-character $\omega: G \to \T$ is approximable.
\end{enumerate}
\end{corollary}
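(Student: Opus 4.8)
The plan is to establish (1) first and obtain (2) as a short formal consequence, relying throughout on the characterizations gathered in Proposition \ref{cabello}.

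For (1) I would apply the criterion of Proposition \ref{cabello}(4): the twisted sum $0\to H\stackrel{\imath}{\to} X\stackrel{\pi}{\to} G\to 0$ is equivalent to one induced by a quasi-homomorphism exactly when it splits algebraically \emph{and} there is a set map $\rho\colon G\to X$ with $\pi\circ\rho=\mathrm{id}_G$, $\rho(0)=0$ and $\rho$ continuous at $0$. Both conditions can be verified separately. The algebraic splitting is automatic: a divisible abelian group is an injective $\Z$-module, so $\mathrm{id}_H$ extends through $\imath$ to a group retraction $X\to H$, whence the underlying exact sequence of abstract groups splits. For the section, note first that $X$ is metrizable, since metrizability is a 3-space property (\cite[5.38(e)]{hr}) and both $H$ and $G$ are metrizable; then Proposition \ref{snork}, applied to the continuous open surjection $\pi\colon X\to G$, produces a $\rho$ which by construction satisfies $\rho(0)=0$ and is continuous at the origin. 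Invoking Proposition \ref{cabello}(4) with these two facts completes (1).

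For (2), observe that $\T=\R/\Z$ is metrizable and divisible, so (1) applies with $H=\T$. If $G\in\ST$ and $\omega\colon G\to\T$ is a quasi-character, then the twisted sum $0\to\T\to\T\oplus_\omega G\to G\to 0$ from Proposition \ref{cabello}(2) splits by the very definition of $\ST$, so $\omega$ is approximable by Proposition \ref{cabello}(3). Conversely, assume every quasi-character $G\to\T$ is approximable and let $0\to\T\to X\to G\to 0$ be any twisted sum; by (1) it is equivalent to one induced by some quasi-character $\omega$, which is approximable by hypothesis, hence the induced twisted sum splits by Proposition \ref{cabello}(3), and therefore so does the original, equivalent one. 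Thus $G\in\ST$.

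No essential difficulty is expected here; the only point that needs a little attention is that the two requirements in Proposition \ref{cabello}(4) are produced by independent arguments---injectivity of divisible groups for the algebraic splitting, metrizability together with Proposition \ref{snork} for the section---so that no compatibility between them has to be arranged, and one must keep in mind that in (1) it is $H$, not $G$, that is assumed divisible.
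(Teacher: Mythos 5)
Your proposal is correct and follows essentially the same route as the paper: metrizability as a 3-space property plus Proposition \ref{snork} for the local section, divisibility for the algebraic splitting, and Proposition \ref{cabello} to conclude both parts. The only difference is that you spell out details (injectivity of divisible groups, the explicit back-and-forth in part (2)) that the paper leaves implicit.
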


\begin{proof} Metrizability is a 3-space property (see \cite[5.38(e)]{hr}). If $0\to H\stackrel{\imath}{\to} X \stackrel{\pi}{\to}G\to 0$ is a topological extension and both $G$ and $H$ are metrizable, so   is $X$, and thus the hypotheses of Proposition \ref{snork} hold. Therefore, there exists an section of $\pi$ continuous at the origin. As $H$ is a divisible group, the twisted sum $ 0\to H\stackrel{\imath}{\to} X \stackrel{\pi}{\to}G\to 0$ splits algebraically. By Proposition \ref{cabello}(4) this twisted sum is equivalent to one induced by a quasi-homomorphism.

The second part is a consequence of the first one and Proposition \ref{cabello}.
 \end{proof}

\begin{definition} For any $t \in {\mathbb T}$ let us call $\langle t \rangle$ the only real number $\alpha \in \,]-1/2,1/2]$ such that $\exp(2\pi i \alpha)=t.$ Then $[t \rightarrow |\langle t \rangle|]$ is a norm on ${\mathbb T}$. (Note that $\langle ts \rangle\in  \langle t \rangle+\langle s \rangle+{\mathbb Z};$ hence  $|\langle ts\rangle|\le |\langle t \rangle+\langle s\rangle |\le |\langle t \rangle|+|\langle s\rangle |$.) Put $\theta(t)=|\langle t \rangle|.$ For every $\beta>0$ let us define $T_{\beta}=\{t\in \mathbb T \,:\, \theta(t)\le \beta\}.$ Note that $\T_+=T_{1/4}$ with this notation.
\end{definition}
\begin{definition} For every Abelian group $G$, every $V \subseteq G$ with $0\in V$ and every $n\in \mathbb N$ we define $(1/2^n)V=\{x\in V \,:\, 2^kx \in V \; \forall k \in \{0,1,\cdots,n\}\}.$
\end{definition}
\begin{proposition}\label{enes} (\cite{chdom}, Corollary 2) For every $n\in \mathbb N\cup\{0\}$ and $\beta\in [0,1/3),$ we have $(1/2^n)(T_{\beta})=T_{\beta/2^n}.$
\end{proposition}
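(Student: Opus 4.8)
The plan is to prove the identity $(1/2^n)(T_\beta) = T_{\beta/2^n}$ by induction on $n$, reducing everything to the case $n=1$. For $n=0$ both sides equal $T_\beta$ by the convention that the only constraint is $2^0 x = x \in T_\beta$, so there is nothing to prove. For the inductive step, I would observe that the defining condition ``$2^k t \in T_\beta$ for all $k \in \{0,1,\dots,n\}$'' factors as ``$t \in T_\beta$ and $2t \in (1/2^{n-1})(T_\beta)$'', so that $(1/2^n)(T_\beta) = T_\beta \cap \{t : 2t \in (1/2^{n-1})(T_\beta)\}$. Applying the inductive hypothesis to the inner set gives $(1/2^n)(T_\beta) = T_\beta \cap \{t : 2t \in T_{\beta/2^{n-1}}\}$, and then the whole thing collapses to the case $n=1$ applied with the radius $\beta/2^{n-1}$ in place of $\beta$ (note $\beta/2^{n-1} < 1/3$ whenever $\beta < 1/3$, so the hypothesis on the radius is preserved).

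So the real content is the case $n=1$: for $\beta \in [0,1/3)$, show $\{t \in T_\beta : 2t \in T_\beta\} = T_{\beta/2}$. Here I would work with the normalized logarithm $\langle \cdot \rangle$ and the norm $\theta(t) = |\langle t \rangle|$. The inclusion $T_{\beta/2} \subseteq (1/2)(T_\beta)$ is the easy direction: if $\theta(t) \le \beta/2$, then since $\langle 2t \rangle \in 2\langle t \rangle + \Z$ and $|2\langle t \rangle| \le \beta < 1/3 < 1/2$, the representative $\langle 2t\rangle$ in $]-1/2,1/2]$ must actually equal $2\langle t \rangle$, hence $\theta(2t) = 2\theta(t) \le \beta$; and of course $\theta(t) \le \beta/2 \le \beta$. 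For the reverse inclusion, suppose $\theta(t) \le \beta$ and $\theta(2t) \le \beta$. Write $\alpha = \langle t \rangle$, so $|\alpha| \le \beta < 1/3$. Then $2\alpha \in \,]-2/3, 2/3[$, and $\langle 2t \rangle$ is the unique element of $]-1/2,1/2]$ congruent to $2\alpha$ mod $\Z$. If $|\alpha| \le 1/4$ then $2\alpha \in [-1/2,1/2]$ and $\langle 2t\rangle = 2\alpha$, giving $2|\alpha| = \theta(2t) \le \beta$, i.e. $\theta(t) \le \beta/2$. The remaining possibility is $1/4 < |\alpha| \le \beta < 1/3$; in that case $\langle 2t \rangle = 2\alpha \mp 1$ and $\theta(2t) = 1 - 2|\alpha| > 1 - 2\beta > 1/3 > \beta$, contradicting $\theta(2t) \le \beta$. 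Hence this case cannot occur, and we always get $\theta(t) \le \beta/2$, i.e. $t \in T_{\beta/2}$.

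The main obstacle, and the place where the hypothesis $\beta < 1/3$ is genuinely used, is precisely this case analysis in the $n=1$ step: one must rule out the ``wraparound'' scenario where $t$ is close to $\pm 1/2$ so that $2t$ lands back near $0$ even though $t$ itself is not small. The bound $\beta < 1/3$ is exactly what forces $1 - 2|\alpha| > \beta$ in that regime, closing the argument; with $\beta \ge 1/3$ the identity genuinely fails. Everything else — the factorization of the iterated condition and the induction — is bookkeeping. I would present the $n=1$ lemma first as a standalone computation and then do the two- or three-line induction, citing the preserved constraint $\beta/2^{n-1} < 1/3$.
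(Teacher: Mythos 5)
Your argument is correct, and it is worth noting that the paper itself gives no proof of this proposition at all: it is quoted as Corollary 2 of \cite{chdom}, so your write-up supplies a self-contained argument where the authors only cite. The substance --- the $n=1$ case analysis showing that for $\theta(t)\le\beta<1/3$ either $\langle 2t\rangle=2\langle t\rangle$ (so $\theta(2t)=2\theta(t)$) or else the wraparound forces $\theta(2t)=1-2|\langle t\rangle|>1/3>\beta$ --- is exactly the right computation, and it is indeed where the hypothesis $\beta<1/3$ is used. One small imprecision in the bookkeeping: after applying the inductive hypothesis you arrive at the set $T_\beta\cap\{t: 2t\in T_{\beta/2^{n-1}}\}$, whereas the $n=1$ lemma ``with radius $\beta/2^{n-1}$'' identifies the set $T_{\beta/2^{n-1}}\cap\{t: 2t\in T_{\beta/2^{n-1}}\}$; the first factors differ ($T_\beta$ versus $T_{\beta/2^{n-1}}$), so the collapse is not a literal substitution. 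This is harmless rather than a gap: your own case analysis shows that $t\in T_\beta$ and $2t\in T_\beta$ already force $\theta(2t)=2\theta(t)$, so $\theta(2t)\le\beta/2^{n-1}$ gives $\theta(t)\le\beta/2^{n}$ directly; I would just state the $n=1$ step in the slightly stronger form ``if $t\in T_\beta$ and $2t\in T_\gamma$ with $\gamma\le\beta<1/3$, then $\theta(t)=\theta(2t)/2\le\gamma/2$'' so that the induction closes without comment.
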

\begin{lemma}\label{guararei}\label{metodo_secreto_xabi} Let $G$ be a topological Abelian group and let $\omega:G\to \mathbb T$ be a quasi-character. Suppose that there exist $U\in {\mathcal N}_0(G)$ and $\beta \in (0,1/3)$ such that $\omega (U) \subseteq T_{\beta}$. Then $\omega$ is continuous at zero.
\end{lemma}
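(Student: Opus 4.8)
The plan is to exploit the dyadic ``norm-doubling'' structure encoded in Proposition \ref{enes}, together with the near-additivity of $\omega$, to show that $\omega$ maps a whole neighborhood basis of $0$ into shrinking balls $T_{\beta/2^n}$, which forces continuity at $0$. First I would use the quasi-homomorphism hypothesis: $\Delta_\omega$ is continuous at $(0,0)$, so there exists a symmetric $V\in\mathcal N_0(G)$ such that $\omega(x+y)-\omega(x)-\omega(y)\in T_\varepsilon$ for all $x,y\in V$, where $\varepsilon>0$ is chosen tiny (say $\varepsilon<1/3-\beta$, and in fact we will want $\varepsilon$ much smaller than $\beta$). Shrinking further, we may assume $V+V\subseteq U$ and $V\subseteq U$.

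The key step is an inductive estimate on $\theta(\omega(x))$ for $x$ ranging over the sets $(1/2^n)V$. Fix $x\in(1/2^n)V$. Since $x,2x,\dots,2^nx$ all lie in $V\subseteq U$, we have $\omega(2^kx)\in T_\beta$ for each $k$. Writing $2^{k+1}x=2^kx+2^kx$ and applying the $\Delta_\omega$ bound gives $\omega(2^{k+1}x)=2\,\omega(2^kx)+\delta_k$ with $\theta(\delta_k)\le\varepsilon$ (interpreting the doubling additively on $\langle\cdot\rangle$, which is legitimate as long as the relevant representatives stay in the range where $\langle\cdot\rangle$ behaves additively — this is exactly what the $T_\beta$ membership with $\beta<1/3$ guarantees via Proposition \ref{enes}). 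Telescoping, $\omega(2^nx)=2^n\omega(x)+\sum_{k=0}^{n-1}2^{n-1-k}\delta_k$, so $\theta\bigl(2^n\omega(x)\bigr)\le \theta\bigl(\omega(2^nx)\bigr)+\sum 2^{n-1-k}\varepsilon\le\beta+2^n\varepsilon$ roughly; more carefully one reorganizes to get $\theta(\omega(x))\le (\beta+c\varepsilon)/2^n$ for a constant $c$, hence $\omega(x)\in T_{\beta'/2^n}$ where $\beta'=\beta+c\varepsilon<1/3$. The cleanest way to package this: show $\omega((1/2^n)V)\subseteq T_{\beta'/2^n}$ directly by induction on $n$, using at each step that $(1/2^{n+1})V\subseteq (1/2^n)V$ and that $x\in(1/2^{n+1})V$ implies $2x\in(1/2^n)V$, combined with $\omega(2x)=2\omega(x)+\Delta_\omega(x,x)$ and Proposition \ref{enes} applied to $T_{\beta'}$.

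Once we know $\omega((1/2^n)V)\subseteq T_{\beta'/2^n}$ for every $n$, continuity at $0$ follows: the sets $(1/2^n)V$ need not themselves be neighborhoods of $0$, so the final step is to observe that for each $n$, $(1/2^n)V\supseteq W_n$ for some $W_n\in\mathcal N_0(G)$ — indeed choose $W_n\in\mathcal N_0(G)$ with $2^kW_n\subseteq V$ for $k=0,\dots,n$, which is possible since scalar multiplication by $2^k$ is continuous at $0$ and there are only finitely many constraints; then $W_n\subseteq (1/2^n)V$. Therefore $\omega(W_n)\subseteq T_{\beta'/2^n}$, and since $\{T_{\beta'/2^n}\}_n$ is a neighborhood basis of $0$ in $\mathbb T$, this gives continuity of $\omega$ at $0$.

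The main obstacle is the bookkeeping in the telescoping/induction step: one must make sure that all intermediate quantities $2^k\omega(x)$ and $\omega(2^kx)$ stay within the ``linear regime'' $T_{1/3}$ of the norm $\theta$ where doubling and the identity $\langle ts\rangle\in\langle t\rangle+\langle s\rangle+\mathbb Z$ can be used without losing control to the mod-$\mathbb Z$ ambiguity — this is precisely why the hypothesis demands $\beta<1/3$ rather than merely $\beta<1/2$, and why Proposition \ref{enes} (which identifies $(1/2^n)T_{\beta'}$ with $T_{\beta'/2^n}$ exactly on this range) is the right tool. A secondary, more routine point is choosing $\varepsilon$ small enough at the outset that the accumulated error $c\varepsilon$ keeps $\beta'$ strictly below $1/3$; this only requires $\varepsilon < (1/3-\beta)/c$ for the appropriate universal constant $c$, so it costs nothing.
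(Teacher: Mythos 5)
Your proposal follows essentially the same route as the paper: telescope the near-doubling relation $\omega(2u)\approx\omega(u)^2$ along the dyadic chain $v,2v,\dots,2^Nv$, and use Proposition \ref{enes} to convert control of all the powers $\omega(v)^{2^k}$ into control of $\omega(v)$ itself. (Your worry that $(1/2^n)V$ might not be a neighborhood of $0$ is unfounded -- it is a finite intersection of preimages of $V$ under the continuous maps $x\mapsto 2^kx$ -- but your $W_n$ construction resolves it correctly anyway.)

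One quantitative claim in your write-up is wrong as stated, though the proof survives. With a \emph{fixed} additivity tolerance $\varepsilon$ on $V$, the recursion $\gamma_{n+1}=(\gamma_n+\varepsilon)/2$ starting from $\gamma_0=\beta$ gives $\gamma_n=\beta/2^n+\varepsilon(1-2^{-n})$: the error accumulates to a constant of order $\varepsilon$ and does \emph{not} decay like $1/2^n$, so the asserted bound $\theta(\omega(x))\le(\beta+c\varepsilon)/2^n$ is false for large $n$, and consequently $\omega((1/2^n)V)\subseteq T_{\beta'/2^n}$ with a fixed $\beta'$ fails. This is harmless for the lemma -- the correct conclusion $\omega((1/2^n)V)\subseteq T_{\beta/2^n+\varepsilon}$ already yields continuity at $0$, since $\varepsilon$ is at your disposal -- but you should state it that way. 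The paper sidesteps the accumulation entirely by letting the near-additivity neighborhood depend on the target level $N$: it takes $W_\rho$ with $\rho=(\beta'-\beta)/(N2^{N-1})$, so that the total telescoped error is at most $\beta'-\beta$ and all powers $\omega(v)^{2^n}$, $n\le N$, land in the same $T_{\beta'}$ with $\beta'<1/3$, after which Proposition \ref{enes} applies in one clean stroke. Either bookkeeping works; just make sure the one you use is internally consistent, and that whatever bound $\alpha$ you feed into the halving step satisfies $\alpha<1/3$ so that the mod-$\mathbb Z$ ambiguity in $\langle\cdot\rangle$ is excluded.
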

\begin{proof} Since $\omega$ is a quasi-character, for every $\rho>0$ there exists $W_{\rho}\in \mathcal{N}_0(G)$ with $\omega(u)^2\overline{\omega(2u)}\in T_{\rho}$ for every $u\in W_{\rho}.$ Fix any $\varepsilon>0.$ Let us find $V\in \mathcal{N}_0(G)$ with $\omega (V) \subseteq T_{\varepsilon}.$

Fix any $\beta'\in(\beta,1/3).$ Find $N\in \mathbb N$ with $\beta'/2^N\le \varepsilon.$ Put $V=(1/2^N)U \cap (1/2^{N-1})W_{(\beta'-\beta)/(N2^{N-1})}.$

It is enough to prove that $$\forall v \in V \quad \forall n\in \{0,1,2,\cdots,N\} \quad \omega(v)^{2^n} \subseteq T_{\beta'}$$ since by  Proposition \ref{enes} this will imply $\omega(v)\in T_{\beta'/2^N} \subseteq T_{\varepsilon}.$ Now, for every $n\in \{0,1,2,\cdots,N\}$
\begin{eqnarray*}
\omega(v)^{2^n}&=& \omega(v)^{2^n}\overline{\omega(2v)}^{2^{n-1}}\omega(2v)^{2^{n-1}}= \big(\omega(v)^2\overline{\omega(2v)}\big)^{2^{n-1}}\omega(2v)^{2^{n-1}}\\
&=& \big(\omega(v)^2\overline{\omega(2v)}\big)^{2^{n-1}} \omega(2v)^{2^{n-1}}\overline{\omega(2\cdot 2v)}^{2^{n-2}}\omega(2\cdot 2v)^{2^{n-2}}\\
&=& \big(\omega(v)^2\overline{\omega(2v)}\big)^{2^{n-1}} \big(\omega(2v)^2 \overline{\omega(2\cdot 2v)}\big)^{2^{n-2}}\omega(2\cdot 2v)^{2^{n-2}}\\ &=&\cdots \\&=&\Big(\prod_{j=0}^{n-1} \big(\omega(2^jv)^2 \overline{\omega(2\cdot 2^jv)} \big)^{2^{n-j-1}}\Big) \omega(2^nv)
\end{eqnarray*}
Since $v\in (1/2^N)U,$ we have that $\omega(2^nv)\in T_{\beta}.$ Now, for every $j\in \{0,\cdots,n-1\}$ and every $n\in \{0,\cdots, N\}$, $\omega(2^jv)^2\overline{\omega(2\cdot 2^jv)}\in T_{(\beta'-\beta)/(N2^{N-1})},$  since $2^jv \in W_{(\beta'-\beta)/(N2^{N-1})}.$ Thus $\big(\omega(2^jv)^2 \overline{\omega(2\cdot 2^jv)} \big)^{2^{n-j-1}}\in T_{(\beta'-\beta)/N},$ and we deduce $\prod_{j=0}^{n-1} \big(\omega(2^jv)^2 \overline{\omega(2\cdot 2^jv)} \big)^{2^{n-j-1}} \in T_{\beta'-\beta}.$ This implies that $\omega(v)^{2^n}\in T_{\beta'}$.
\end{proof}
\begin{corollary} \label{unochi2}  A quasi-character $\omega:G\to \mathbb T$ is approximable if and only if there exist $U\in {\mathcal N}_0(G)$, $\beta \in (0,1/3)$ and an algebraic character $\chi\in \text{Hom}(G, \mathbb T)$ such that $(\omega \overline{\chi}) (U) \subseteq T_{\beta}.$ Moreover any such $\chi$ approximates $\omega.$
\end{corollary}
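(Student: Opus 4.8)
The plan is to deduce both implications from Lemma~\ref{guararei} together with the characterization of approximability in Proposition~\ref{cabello}(3), i.e. the fact that $\omega$ is approximable exactly when $\omega-\chi$ (here in multiplicative notation $\omega\overline\chi$) is continuous at $0$ for some algebraic character $\chi\in\text{Hom}(G,\mathbb T)$.

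First I would prove the easy direction ($\Leftarrow$). Suppose there exist $U\in\mathcal N_0(G)$, $\beta\in(0,1/3)$ and $\chi\in\text{Hom}(G,\mathbb T)$ with $(\omega\overline\chi)(U)\subseteq T_\beta$. Since $\chi$ is an algebraic homomorphism, $\Delta_{\omega\overline\chi}=\Delta_\omega$, so $\omega\overline\chi$ is again a quasi-character. Hence $\omega\overline\chi$ satisfies the hypotheses of Lemma~\ref{guararei}, and we conclude that $\omega\overline\chi$ is continuous at $0$. By Definition~\ref{aren} this says precisely that $\omega$ is approximable, and moreover that the character $\chi$ approximates $\omega$ — which is the "moreover" clause.

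For the converse ($\Rightarrow$), suppose $\omega$ is approximable, so there is a homomorphism $\chi\in\text{Hom}(G,\mathbb T)$ with $\omega\overline\chi$ continuous at $0$. Continuity at $0$ means that for the neighborhood $T_\beta$ of $1$ in $\mathbb T$ (with any fixed $\beta\in(0,1/3)$, e.g. $\beta=1/4$, recalling $\mathbb T_+=T_{1/4}$) there is $U\in\mathcal N_0(G)$ with $(\omega\overline\chi)(U)\subseteq T_\beta$. This $U$, this $\beta$ and this $\chi$ are exactly what is required.

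I do not expect any serious obstacle here: the statement is essentially a repackaging of Lemma~\ref{guararei} and the definition of approximability, with the only point needing a line of justification being that precomposing/multiplying a quasi-character by an algebraic character does not disturb the quasi-character condition, since $\Delta_\omega$ is unchanged. The mild subtlety is to keep the additive notation of Definition~\ref{aren} and the multiplicative notation on $\mathbb T$ consistent (so "$\omega-a$ continuous at $0$" becomes "$\omega\overline\chi$ continuous at $0$"), but this is purely notational.
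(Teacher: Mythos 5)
Your proof is correct and is precisely the argument the paper intends: the corollary is stated without proof as an immediate consequence of Lemma~\ref{guararei}, using that $\Delta_{\omega\overline{\chi}}=\Delta_{\omega}$ when $\chi$ is a homomorphism, and the reverse direction is just the definition of continuity at $0$ applied to the neighborhood $T_{\beta}$ of $1$ in $\mathbb{T}$. No gaps.
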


\begin{lemma}(\cite{cabello}, Lemma 6)\label{lema_important_cabello}
Let $G$ be an Abelian group (no topology is assumed), and let $\omega: G \to \T$ any mapping such that for some $\beta < 1/3$, $\omega (x+y) \overline{\omega (x)} \overline{\omega (y)} \in T_\beta$ $\forall x, y \in G$. Then there is a unique character $\chi:G \to \T$ such that $\omega (x) \overline{\chi(x)} \in T_\beta \;\forall x\in G$.
\end{lemma}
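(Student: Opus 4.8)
The plan is to build the character $\chi$ by hand — via a circle‑group version of Hyers' averaging trick — and then check the approximation estimate and the homomorphism property separately, the latter being the crux.

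First I would reduce to $\omega(0)=1$ (which follows, up to a harmless adjustment, from testing the hypothesis at $x=y=0$, where it gives $\overline{\omega(0)}\in T_\beta$). Since $\beta<1/2$, the projection $p$ is injective on $[-1/2,1/2]$, so the defect $\delta\omega(x,y):=\omega(x+y)\overline{\omega(x)}\,\overline{\omega(y)}$ lifts canonically to $\delta(x,y):=\langle\delta\omega(x,y)\rangle\in[-\beta,\beta]$. Iterating $\omega(2u)=\omega(u)^2\,p\big(\delta(u,u)\big)$ gives, for each $n$, $\omega(2^nx)=\omega(x)^{2^n}\,p(\tau_n(x))$ with $\tau_n(x)=\sum_{k=0}^{n-1}2^{\,n-1-k}\,\delta(2^kx,2^kx)$; dividing by $2^n$, the series $\sigma(x):=\sum_{k\ge0}2^{-(k+1)}\,\delta(2^kx,2^kx)$ converges absolutely with $|\sigma(x)|\le\beta$, and $2^{-n}\tau_n(x)\to\sigma(x)$. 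I then set $\chi(x):=\omega(x)\,p(\sigma(x))$, morally $\chi(x)=\lim_n\omega(2^nx)^{1/2^n}$ along the principal branch of square roots. The approximation bound is then immediate: $\omega(x)\overline{\chi(x)}=p(-\sigma(x))\in T_\beta$ because $|\sigma(x)|\le\beta$; and keeping only the tail of the series for $\sigma$ even shows $\omega(2^nx)\overline{\chi(x)^{2^n}}\in T_\beta$ for every $n$, a uniform estimate used below.

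The heart of the proof is that $\chi$ is multiplicative. Introduce $\iota(a,b):=2\delta(a,b)+\delta(a+b,a+b)-\delta(a,a)-\delta(b,b)-\delta(2a,2b)$; since $\omega(2u)=\omega(u)^2p(\delta(u,u))$ and $p\circ\delta=\delta\omega$ is a (coboundary, hence a) $2$-cocycle, one checks $p(\iota(a,b))=1$, so $\iota$ is integer-valued, and from $\beta<1/3$ we get $|\iota(a,b)|\le 6\beta<2$, whence $\iota(a,b)\in\{-1,0,1\}$. A small cancellation gives $\iota(a,a)=0$, which forces $\chi(2z)=\chi(z)^2$ and so $\chi(2^nz)=\chi(z)^{2^n}$. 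Combining the identities above for arbitrary $x,y$ produces
\[
\big(\chi(x+y)\,\overline{\chi(x)\chi(y)}\big)^{2^n}=p\Big(\textstyle\sum_{k\ge n}2^{\,n-k-1}\,\iota(2^kx,2^ky)\Big),
\]
so this element, together with all its $2^m$-th powers, stays inside a fixed small neighbourhood $T_\gamma$ of $1$ with $\gamma$ a small multiple of $\beta$. Here $\beta<1/3$ enters decisively through Proposition \ref{enes}: in that range halving inside $\T$ does not wrap around, so $\bigcap_m (1/2^m)(T_\gamma)=\{1\}$, forcing $\chi(x+y)\overline{\chi(x)\chi(y)}=1$. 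I expect the delicate point to be precisely this last step — keeping track of the integers $\iota(2^kx,2^ky)$ well enough that the ambient constant stays below $1/3$ so that Proposition \ref{enes} is applicable.

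Finally, uniqueness follows by the same mechanism. If $\chi_1,\chi_2$ both satisfy the conclusion, then $\psi:=\chi_1\overline{\chi_2}$ is a homomorphism and, for all $x$ and $n$, $\psi(2^nx)=\psi(x)^{2^n}$ is the product of an element of $T_\beta$ and one of $\overline{T_\beta}=T_\beta$, hence lies in $T_{2\beta}$; Proposition \ref{enes} then gives $\psi(x)\in\bigcap_m (1/2^m)(T_{2\beta})=\{1\}$, i.e. $\chi_1=\chi_2$.
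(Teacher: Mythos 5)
The paper does not actually prove this lemma; it is quoted verbatim from Cabello's \emph{Quasi-homomorphisms} (Lemma 6 there), so your attempt has to stand entirely on its own. Much of it does: the canonical lift $\delta$ of the defect, the telescoped series $\sigma(x)=\sum_k 2^{-(k+1)}\delta(2^kx,2^kx)$, the definition $\chi=\omega\cdot(p\circ\sigma)$, the bound $\omega\overline{\chi}\in T_\beta$, the integrality of $\iota$ with $|\iota|\le 6\beta<2$, the identity $\chi(2z)=\chi(z)^2$, and the displayed formula for $\bigl(\chi(x+y)\overline{\chi(x)\chi(y)}\bigr)^{2^n}$ are all correct (I checked the telescoping: $\chi(x+y)\overline{\chi(x)\chi(y)}=p\bigl(\sum_{k\ge 0}2^{-k-1}\iota(2^kx,2^ky)\bigr)$). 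The gap is exactly where you flag it, and it is not a technicality: the tail $\sum_{k\ge n}2^{n-k-1}\iota(2^kx,2^ky)$ is \emph{not} contained in an interval of length a small multiple of $\beta$. Since the weights $2^{n-k-1}$ sum to $1$ and the integers $\iota$ can equal $\pm 1$ as soon as $6\beta\ge 1$, the only a priori bound on the tail is $1$, i.e.\ $T_1=\T$; the alternative bound $|\delta(2^nx,2^ny)+\sigma(2^n(x+y))-\sigma(2^nx)-\sigma(2^ny)|\le 4\beta$ gives a set $T_{4\beta}$ with $4\beta$ possibly larger than $1/2$, so Proposition \ref{enes} (which needs the radius below $1/3$) is not applicable. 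Your argument therefore proves multiplicativity only when $\beta<1/6$ (where $6\beta<1$ forces $\iota\equiv 0$ and the tail vanishes), whereas the whole content of the sharp constant $1/3$ lies in the range $1/6\le\beta<1/3$. Closing this needs a genuinely new idea (this is where Cabello's proof does real work), not just bookkeeping of the integers $\iota(2^kx,2^ky)$.

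The uniqueness paragraph has the same defect in a form that cannot be repaired at this level of generality. You need $\bigcap_m(1/2^m)(T_{2\beta})=\{1\}$, but Proposition \ref{enes} requires $2\beta<1/3$, i.e.\ $\beta<1/6$; for $\beta\ge 1/6$ the element $p(1/3)$ satisfies $p(1/3)^{2^k}=p(\pm 1/3)\in T_{2\beta}$ for every $k$, so it survives in the intersection. Worse, the literal uniqueness claim is delicate at the top of the range: on $G=\Z/3\Z$ take $\omega(0)=1$, $\omega(1)=p(0.3)$, $\omega(2)=p(-0.3)$ and $\beta=0.3$; the defect lies in $T_{0.1}\subseteq T_\beta$, yet both the trivial character and $\chi(k)=p(k/3)$ satisfy $\omega\overline{\chi}\in T_{0.3}$ pointwise. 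So any correct uniqueness proof must use more than ``the quotient of two approximants is a character with values in $T_{2\beta}$,'' and you should consult Cabello's precise formulation and argument rather than trying to patch this step.
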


\begin{theorem}\label{the_l0}
Let $\mu$ be a nonatomic $\sigma$-finite measure on a set $\Delta$. Let be $L_0:=L_0 (\mu)$ the space of all measurable functions on $\Delta$ with the norm
\[
\| f\| := \int_\Delta \min \{1,\;|f(t)|\}\, d \mu (x)
\]
Every quasi-character $\omega : L_0  \to \T $ is approximable.
\end{theorem}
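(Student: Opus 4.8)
The plan is to show that any quasi-character $\omega: L_0 \to \mathbb{T}$ can be controlled, on a suitable neighborhood of $0$, in the sense of Corollary~\ref{unochi2}; i.e.\ I want to produce $U \in \mathcal{N}_0(L_0)$, $\beta \in (0,1/3)$ and an algebraic character $\chi \in \mathrm{Hom}(L_0,\mathbb{T})$ with $(\omega\bar\chi)(U) \subseteq T_\beta$. Since approximability of every quasi-character on a metrizable group is equivalent to membership in $\ST$ via Corollary~\ref{pi}, but here we only need the statement about quasi-characters, it suffices to carry out this single estimate. The mechanism that makes this possible for $L_0$ is the same one that makes $L_0$ well-behaved for $\mathcal{K}$-space purposes: functions in $L_0$ can be split into a bounded number of pieces of arbitrarily small norm, because the measure is nonatomic and $\sigma$-finite. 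Concretely, given $f \in L_0$ with $\|f\|$ small, and any $m \in \mathbb{N}$, one can write $f = f_1 + \cdots + f_m$ with the $f_j$ having disjoint supports and $\|f_j\| \le \|f\|$ (split the domain, using nonatomicity to equidistribute the mass of $f$ over $m$ measurable pieces); more importantly, one can iterate and split $f$ into dyadic pieces so that $f$ and all its "partial sums" stay inside a small ball.

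The key step is to exploit the quasi-character inequality along such decompositions. Fix $\varepsilon > 0$ small (say $\varepsilon < 1/6$) and choose, by the quasi-character property, $W \in \mathcal{N}_0(L_0)$ so that $\omega(x+y)\overline{\omega(x)}\,\overline{\omega(y)} \in T_\varepsilon$ whenever $x, y \in W$. Take $U$ to be a ball small enough that, roughly, any $f \in U$ can be telescoped through a sequence of elements of $W$: write $f = g_1 + g_2 + \cdots$ as a sum of finitely many disjointly supported pieces all lying in $W$, with all partial sums also in $W$. Applying the quasi-character inequality repeatedly along this telescoping, one gets $\omega(f)\overline{\omega(g_1)}\cdots\overline{\omega(g_k)} \in T_{(k-1)\varepsilon}$, and each $\omega(g_j)$ is itself close to $T_\varepsilon$-controlled by a further split — the point is that by refining the decomposition one shows $\omega(f) \in T_\beta$ for a $\beta$ independent of $f \in U$, giving the hypothesis $\omega(U) \subseteq T_\beta$ of Lemma~\ref{guararei}, hence $\omega$ is continuous at zero. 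But continuity at zero of a quasi-character means the induced twisted sum is the trivial one up to the algebraic splitting, and then one invokes Lemma~\ref{lema_important_cabello}: continuity at $0$ plus the quasi-character inequality lets one find, after possibly shrinking, a character $\chi$ with $\omega\bar\chi \in T_\beta$ globally, so $\omega$ is approximable.

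Let me restate the logical skeleton more carefully, since the "telescoping" must be organized so the error does not blow up. The right way is: show that for every $\varepsilon \in (0,1/3)$ there is $U \in \mathcal{N}_0(L_0)$ with $\omega(U) \subseteq T_\varepsilon$. To do this, pick $W$ with $\Delta_\omega(W\times W) \subseteq T_{\varepsilon/2}$ (and shrink so $W + W \subseteq$ domain of control), and observe that by nonatomicity every $f$ in a sufficiently small ball $U$ admits, for a chosen large $N$, a representation $f = \sum_{j=1}^{N} f_j$ with disjoint supports where \emph{each} $f_j \in W$ \emph{and} moreover each $f_j$ can itself be halved within $W$ as many times as we like. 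Feeding this into Lemma~\ref{guararei}'s proof scheme (the dyadic product expansion of $\omega(v)^{2^n}$) — applied not to scalar multiples but to the disjoint-support decomposition — yields the bound $\omega(f) \in T_\varepsilon$. Then Lemma~\ref{guararei} gives continuity of $\omega$ at $0$, and Corollary~\ref{unochi2} together with Lemma~\ref{lema_important_cabello} (applied on a ball where $\Delta_\omega$ is $T_\beta$-valued with $\beta < 1/3$, which exists precisely because $\omega$ is now continuous at $0$) produces the approximating character $\chi$.

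The main obstacle I expect is organizing the disjoint-support decomposition so that every intermediate partial sum also lies inside the "good" neighborhood $W$, and controlling the accumulated $T_{\varepsilon}$-errors when combining $\omega$ on the many pieces — i.e.\ making sure the number of pieces $N$ needed (which grows as the target $\varepsilon$ shrinks) does not force the per-step error up past $1/3$. This is exactly where the special structure of the $L_0$-norm is essential: because $\|f\| = \int \min\{1,|f|\}\,d\mu$ is an "absolutely continuous, nonatomic" gauge, one has genuine freedom to chop the domain and keep each piece (and each partial sum) norm-small, so the decomposition depth $N$ can be taken as large as needed while each application of the quasi-character inequality only contributes a controlled fraction of the total budget. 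Everything else — the passage from continuity at $0$ to an honest approximating character, and the final conclusion — is routine given Lemmas~\ref{guararei}, \ref{lema_important_cabello} and Corollary~\ref{unochi2}.
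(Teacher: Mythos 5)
Your plan founders on its central step: you set out to prove that $\omega$ itself satisfies $\omega(U)\subseteq T_\varepsilon$ for arbitrarily small $\varepsilon$, i.e.\ that $\omega$ is continuous at zero. That is false for general quasi-characters on $L_0$: any nontrivial algebraic character $\chi\in\mathrm{Hom}(L_0,\T)$ is a quasi-character (its $\Delta_\chi$ vanishes identically) and is approximable (by itself), yet it is not continuous at $0$, since $L_0$ admits no nontrivial continuous characters. The telescoping argument cannot repair this: the inequality $\omega(f)\overline{\omega(g_1)}\cdots\overline{\omega(g_k)}\in T_{(k-1)\varepsilon}$ only relates $\omega(f)$ to the values $\omega(g_j)$ on the pieces, and nothing bounds those values; ``splitting further'' pushes the same problem down one level and the regress never terminates. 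Relatedly, you cannot apply Lemma~\ref{lema_important_cabello} ``on a ball'': that lemma needs the defect to lie in $T_\beta$ for \emph{all} pairs of elements of a \emph{group}, and a ball is not a subgroup. Finally, if $\omega$ really were continuous at $0$ it would already be approximable by the zero character, so the machinery you invoke afterwards would be redundant.

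What actually makes $L_0$ work --- and what your disjoint-support idea circles around without landing on --- is that for a measurable set $\Delta_i$ with $\mu(\Delta_i)\le\delta_0$, the \emph{entire subgroup} $L_0(\Delta_i)$ of functions supported on $\Delta_i$ lies in the $\delta_0$-ball, because $\|f\|\le\mu(\Delta_i)$ for every such $f$. The paper partitions $\Delta$ into finitely many such pieces (nonatomicity and $\sigma$-finiteness enter only here), so that $L_0=\prod_{i=1}^r L_0(\Delta_i)$ and the quasi-character inequality holds \emph{globally} on each factor. Lemma~\ref{lema_important_cabello} then applies to each restriction $\omega_i$ and yields an algebraic character $\chi_i$ with $\omega_i\overline{\chi_i}\in T_\beta$ everywhere on $L_0(\Delta_i)$; it is this $\chi_i$ that absorbs the possibly wildly discontinuous ``character part'' of $\omega_i$. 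Only after this subtraction does Lemma~\ref{guararei} give continuity at $0$ of $\omega_i\overline{\chi_i}$, and then $\chi(f)=\prod_{i=1}^r\chi_i(f_i)$ approximates $\omega$. The order of operations must therefore be: first produce the character globally on each bounded factor via Cabello's lemma, then deduce continuity of the difference --- not the other way round.
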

\begin{proof}
We can assume, without loss of generality, that $\mu$ is a probability (in other chase, choosing a probability $\nu$ with the same null sets as $\mu$, we have that $L_0(\mu)$ and $L_0(\nu)$ are linearly isomorphic, hence topologically isomorphic with respect the underlying group structures).

Let $\omega: L_0  \to \T$ be a quasi-character, fix $\beta < 1/3 $ and choose $\delta_0$ such that $\omega(f+g) \overline{\omega (f)} \overline{\omega (g)} \in T_\beta$ for every $f,\,g$ with $ \|f\|_0 , \|g\|_0 \leq \delta_0$.

Let $\Delta = \bigoplus ^r_{i=1} \Delta_i$ be a partition  into measurable sets, with $\mu (\Delta_i)\leq \delta_0$ for all $1\leq i \leq r$. Then $L_0=\prod^r_{i=1}L_0 (\Delta_i)$ as a topological direct product. For all $f\in L_0 (\Delta_i)$ we have that
\begin{align*}
\|f\|_0 &= \int_\Delta \min \{1,\;|f(t)|\}\, d \mu (x) \leq   \int_{\{t\in \Delta: f(t)\not = 0\}} 1d \mu (x)\\
            & = \mu \{t\in \Delta: f(t)\not = 0\}\leq \mu (\Delta_i) \leq \delta_0
\end{align*}
Call $\omega_i$ the restriction of $\omega$ to each $L_0 (\Delta_i)$. As $\omega_i (f+g) \overline{\omega_i (f)} \overline{\omega_i (g)} \in T_\beta$ for every $ 1\leq i\leq r$ and $f,g\in  L_0 (\Delta_i)$, we can apply lemma \ref{lema_important_cabello} to obtain unique characters $\chi_i : L_0 (\Delta_i) \to \T$ such that  $\omega_i (f) \overline{\chi_i(f)} \in T_\beta$ for every $f\in L_0 (\Delta_i)$. By \ref{metodo_secreto_xabi}, we have that each $\omega_i \overline{\chi_i}$ is continuous at the origin of $L_0 (\Delta_i)$ and thus, the character $\chi: L_0  \to \T$ given by $\chi(f)=\prod^r_{i=1} \chi_i (f_i)$ (where $f=\sum^r_{i=1} f_i, \; f_i\in L_0 (\Delta_i)$) approximates $\omega$ near the origin.
\end{proof}

\begin{corollary}\label{pitburn}
 $L_0$ belongs to $\ST$.
\end{corollary}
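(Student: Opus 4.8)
The plan is to deduce Corollary \ref{pitburn} directly from the two immediately preceding results. We have just established in Theorem \ref{the_l0} that every quasi-character $\omega:L_0\to\T$ is approximable, so the task reduces to connecting ``all quasi-characters are approximable'' with membership in $\ST$. This connection is exactly Corollary \ref{pi}(2), which asserts that a metrizable topological Abelian group $G$ belongs to $\ST$ if and only if every quasi-character $\omega:G\to\T$ is approximable. Thus the entire proof is: observe that $L_0$ is metrizable, invoke Corollary \ref{pi}(2), and apply Theorem \ref{the_l0}.

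The one genuine verification needed is that $L_0=L_0(\mu)$ is metrizable. This is immediate from the fact that $\|\cdot\|_0$, as defined in the statement of Theorem \ref{the_l0}, is an $F$-norm (it is subadditive, symmetric, and vanishes only at $0$ since $\mu$ can be taken to be a probability measure with the same null sets), and the topology of convergence in measure on $L_0$ is exactly the topology induced by the invariant metric $d(f,g)=\|f-g\|_0$. In particular the underlying topological group of $L_0$ is metrizable, which is all that Corollary \ref{pi}(2) requires. So there is no real obstacle here; the work has all been done in Theorem \ref{the_l0}.

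\begin{proof}
Since $L_0=L_0(\mu)$ is a metrizable topological Abelian group (its topology, that of convergence in measure, is induced by the invariant metric $d(f,g)=\|f-g\|_0$), Corollary \ref{pi}(2) applies: $L_0$ belongs to $\ST$ if and only if every quasi-character $\omega:L_0\to\T$ is approximable. By Theorem \ref{the_l0} this latter condition holds. Hence $L_0\in\ST$.
\end{proof}
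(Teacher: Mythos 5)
Your proof is correct and takes exactly the same route as the paper: the paper's proof of this corollary is a one-line appeal to Theorem \ref{the_l0} and Corollary \ref{pi}, noting that $L_0$ is metrizable. Your additional remark justifying metrizability via the invariant metric $d(f,g)=\|f-g\|_0$ is a harmless elaboration of the same argument.
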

\begin{proof}
This follows from Theorem \ref{the_l0} and Corollary \ref{pi}, since $L_0$ is a metrizable group.
\end{proof}

\begin{example}
Let $L_0$ be as in \ref{the_l0}. Fix a discrete, nontrivial $D\leq L_0$ (e. g. a copy of $\Z$). Note that $L_0$ does not have any nontrivial continuous characters, and in particular $D$ is not dually embedded in $L_0$. Using Theorem \ref{quotientst} we deduce that $L_0/ D$ is not  in $\ST$. Since $L_0\in \ST,$ this example shows that being in $\ST$ is not preserved by local isomorphisms (compare with Corollary \ref{opensbg} above).
\end{example}

Let $(G, \tau)$ be a topological group. We say that the topology $\tau$ is a linear topology if it has a basis of neighborhoods of $G$ formed by open subgroups.

\begin{proposition} \label{linearat} Let $G$ be a topological Abelian group with a linear topology. Then every quasi-character of $G$ is approximable.
\end{proposition}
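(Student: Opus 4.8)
The plan is to exploit the fact that a linear topology has a neighborhood basis of $0$ consisting of open subgroups: this converts the purely \emph{local} information carried by the quasi-character condition into a \emph{global algebraic} statement on an open subgroup, where Cabello's Lemma \ref{lema_important_cabello} can be applied verbatim, and then to glue everything back together with Corollary \ref{unochi2}.

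First I would use that $\omega$ is a quasi-character: the map $\Delta_\omega(x,y)=\omega(x+y)\overline{\omega(x)}\overline{\omega(y)}$ is continuous at $(0,0)$, so fixing any $\beta\in(0,1/3)$ there is $W\in\mathcal{N}_0(G)$ with $\Delta_\omega(W\times W)\subseteq T_\beta$. Since the topology of $G$ is linear, I can choose an open subgroup $H\le G$ with $H\subseteq W$; then $\omega(x+y)\overline{\omega(x)}\overline{\omega(y)}\in T_\beta$ for all $x,y\in H$. Regarding $H$ now as an abstract group, Lemma \ref{lema_important_cabello} produces a unique algebraic character $\chi_H\colon H\to\T$ with $\omega(x)\overline{\chi_H(x)}\in T_\beta$ for every $x\in H$.

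Next I would extend $\chi_H$ to all of $G$: since $\T$ is divisible it is an injective $\Z$-module, so $\chi_H$ extends to some $\chi\in\text{Hom}(G,\T)$. By construction $(\omega\overline{\chi})(H)=(\omega\overline{\chi_H})(H)\subseteq T_\beta$, and $H$ being open means $H\in\mathcal{N}_0(G)$; hence Corollary \ref{unochi2} applies with $U=H$, this $\beta$, and this $\chi$, and yields that $\omega$ is approximable (with $\chi$ approximating it). Equivalently, one checks directly that $\omega\overline{\chi}$ is again a quasi-character, since $\Delta_{\omega\overline{\chi}}=\Delta_\omega$ because $\chi$ is a homomorphism, and then Lemma \ref{guararei} gives that $\omega\overline{\chi}$ is continuous at $0$, i.e. $\chi$ approximates $\omega$.

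I do not expect a serious obstacle. The only points needing a little care are: selecting the neighborhood $W$ \emph{inside} an open subgroup, so that the quasi-character inequality holds simultaneously for \emph{all} pairs coming from a subgroup (this is precisely the step where linearity of the topology is essential, and the step that has no analogue for a general topology); and noting that the extension step uses nothing beyond divisibility of $\T$. The remaining content is a direct invocation of Lemma \ref{lema_important_cabello} together with Corollary \ref{unochi2} (or Lemma \ref{guararei}).
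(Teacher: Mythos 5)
Your argument is correct and is essentially the paper's own proof: pass to an open subgroup on which the quasi-character estimate holds globally, apply Lemma \ref{lema_important_cabello} there to get an algebraic character, extend it to $G$ by divisibility of $\T$, and conclude via Corollary \ref{unochi2}. The only cosmetic difference is that the paper works with $\T_+=T_{1/4}$ rather than an arbitrary $\beta\in(0,1/3)$, which changes nothing.
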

\begin{proof} Let $\omega:G\to \mathbb T$ be a quasi-character. There exists an open subgroup $U\le G$ such that $\omega(a+b)\overline{\omega(a)\omega(b)}\in \mathbb T_+$ for every $a,b\in U.$ Using Lemma \ref{lema_important_cabello}  we deduce that there exists an algebraic character $\chi:U\to \mathbb T$ with $\omega(u) \overline{\chi(u)}\in \mathbb T_+$ for every $u\in U.$ Now  Corollary \ref{unochi2} implies that any algebraic extension of $\chi$ approximates $\omega.$
\end{proof}

\begin{corollary} Let $G$ be a topological Abelian group with a metrizable linear topology. Then $G\in \ST$.
\end{corollary}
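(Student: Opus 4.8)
The plan is to read this corollary off directly from the two results immediately preceding it. First I would apply Proposition \ref{linearat}: since $G$ carries a linear topology, every quasi-character $\omega\colon G\to\T$ is approximable. Then, because $G$ is also metrizable, I would invoke Corollary \ref{pi}(2), which characterizes membership in $\ST$ for a metrizable group precisely by the approximability of all its quasi-characters into $\T$. Chaining the two statements gives $G\in\ST$.

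For the reader's convenience I would recall why Corollary \ref{pi}(2) is the right tool here. Given a twisted sum $0\to\T\to X\to G\to 0$, metrizability is a three-space property, so $X$ is metrizable; Proposition \ref{snork} then produces a section $\rho\colon G\to X$ of the quotient map that is continuous at the origin, and since $\T$ is divisible the sequence splits algebraically. By Proposition \ref{cabello}(4) the twisted sum is equivalent to one induced by a quasi-character $\omega\colon G\to\T$; by the previous paragraph $\omega$ is approximable, hence by Proposition \ref{cabello}(3) the induced twisted sum splits, and therefore so does the original one.

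I do not expect a genuine obstacle: the corollary is a straightforward synthesis of Proposition \ref{linearat} and Corollary \ref{pi}(2). The only points deserving a moment's attention are that $\T$ is both metrizable and divisible (so that Corollary \ref{pi}(1), used inside the proof of part (2), applies with $H=\T$), and that the open subgroup $U\le G$ furnished by Proposition \ref{linearat} enters only as a neighborhood of zero on which $\omega\overline{\chi}$ takes values in a small $T_\beta$, which is exactly the input required by Corollary \ref{unochi2}.
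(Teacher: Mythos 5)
Your proposal is correct and follows exactly the paper's own argument: the corollary is deduced by combining Proposition \ref{linearat} with Corollary \ref{pi}(2), and your additional remarks merely unpack the (already established) proof of Corollary \ref{pi}. No issues.
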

\begin{proof} This follows from Proposition \ref{linearat} and Corollary \ref{pi}.
\end{proof}

\begin{example}
Countable products of discrete abelian groups belong to $\ST$.
\end{example}


\begin{thebibliography}{99}

\bibitem{ATCh} S. Ardanza-Trevijano, M.J. Chasco. The Pontryagin duality of sequential limits of topological abelian groups. J. Pure Appl. Algebra 202 (2005),  11-21.

\bibitem{Ban} W. Banaszczyk.  { Additive Subgroups of Topological Vector Spaces},  Lecture Notes in Mathematics 1466. Springer-Verlag, Berlin Heidelberg, New York, 1991.

\bibitem{BruPei}M. Bruguera and E. Mart\'{\i}n-Peinador. Open subgroups, compact subgroups an Binz-Butzman reflexivity. Topology Appl. 72 (1996), 101-111.

\bibitem{brutka} M. Bruguera and M. G. Tkachenko. The three space problem in topological groups. Topol. Appl. 153  (2006), no. 13, pp. 2278-2302.

\bibitem{cabello2000}F. Cabello. Pseudo Characters and Almost Multiplicative Functionals. J. Math. Anal. Appl. 248 (2000), 275-289.

\bibitem{cabellojmaa} F. Cabello. Quasi-additive mappings. J. Math. Anal. Appl. 290 (2004), 263-270.

\bibitem{cabello} F. Cabello. Quasi-homomorphisms.  Fundam. Math. 178, 3 (2003), 255-270.


\bibitem{castillo} J. M. F. Castillo. On the ``three-space" problem for locally quasi-convex
topological groups. Arch. Math. 74 (2000), 253-262.


\bibitem{chdom} M.J. Chasco, X. Dom\'{\i}nguez. Topologies on the direct sum of topological abelian groups. Topol. Appl. 133 (2003) 209-223. 

\bibitem {CR} W. W. Comfort and  K. A. Ross.  Topologies induced by
groups of characters.  Fund. Math. 55 (1964) 283-291.



\bibitem{domanski} P. Doma\'nski. On the splitting of twisted sums, and the three space problem for local convexity. Studia Math. 82 (1985) 155-189.


\bibitem{GaHer} J. Galindo and S. Hern\'{a}ndez. The concept of boundedness and the Bohr compactification
of a MAP abelian group. Fund. Math. 159 (1999), 195-218.


\bibitem{hr}  E. Hewitt, K.A. Ross. Abstract harmonic analysis I. 2nd edition. Springer-Verlag, 1979.



\bibitem{kaltonpeck} N. J. Kalton and N. T. Peck. Quotients of $L_p(0,1)$ for $0\le p <1$. Studia Math. 64 (1979), 65-75.

\bibitem{kaltonsampler} N. J. Kalton, N. T. Peck, James W. Roberts. An F-space sampler. London Mathematical Society, Lecture note series 89 (1984).


\bibitem{morris}
S. A. Morris. Pontryagin Duality and the Structure of Locally Compact abelian Groups. London Mathematical Society Lecture Note Series 29.
\bibitem{noble} N. Noble. K-groups and duality. Trans. Amer. Math. Soc. 51 (1970), 551-561.



\bibitem{reade} J. B. Reade. A theorem on cardinal numbers asociated with inductive limits of locally compact
abelian groups. Math. Proc. Camb. Phil. Soc. 61 (1965), 69-74.

\bibitem{ribe} M. Ribe. Examples for the nonlocally convex three space problem. Proc. Amer. Math. Soc. 237 (1979) 351-355.
\bibitem{roberts} J. W. Roberts. A non locally convex $F$-space with the Hahn-Banach aproximation property, Banach spaces of analytic functions. Springer Lecture Notes 604, (1977) 76-81


\bibitem{salvador}  S. Hern\'{a}ndez.  A theorem on cardinal numbers associated with ${\mathcal L}_{\infty}$ abelian
groups. Proc. Camb. Phil. Soc 134 (2003) 33-39

\bibitem{sulley} L. J. Sulley. On countable inductive limits of locally compact abelian groups. J. London
Math. Soc. 2, 5 (1972), 629-637.

\bibitem{Var} N. Th. Varopoulos. { Studies in harmonic analysis}.  Proc. Camb. Phil. Soc. { 60} (1964) 465-516.
\bibitem{warner} S. Warner. Topological Fields. North-Holland, Mathematics Studies 157.


\bibitem{venkat} R. Venkataraman. Characterization, structure and analysis on abelian L1 groups. Mh.
Math. 100 (1985), 47-66.
\end{thebibliography}
\end{document}